\newtheorem{thm}{Theorem} [section]
\newtheorem{lem}[thm]{Lemma}
\theoremstyle{definition}
\newtheorem{defn}[thm]{Definition}
\theoremstyle{remark}
\newtheorem{rem}[thm]{Remark}
\numberwithin{equation}{section}
\begin{document}
\title{The Igusa local zeta functions of superelliptic curves}%
\author[Q.Y. Yin]{Qiuyu Yin}
\address{Mathematical College, Sichuan University, Chengdu 610064, P.R. China}
\email{yinqiuyu26@126.com}
\author[S.F. Hong]{Shaofang Hong*}
\address{Mathematical College, Sichuan University, Chengdu 610064, P.R. China}
\email{sfhong@scu.edu.cn, s-f.hong@tom.com, hongsf02@yahoo.com }
\thanks{*Hong is the corresponding author and was supported partially
by National Science Foundation of China Grant \#11771304.}

\keywords{local zeta functions, superelliptic curves,
stationary phase formula, Newton polyhedra}
\subjclass[2000]{Primary 11S40, 11S80}
\date{\today}%
\begin{abstract}
Let $K$ be a local field and $f(x)\in K[x]$ be a non-constant
polynomial. The local zeta function $Z_f(s, \chi)$ was first
introduced by Weil, then studied in detail by Igusa. When
${\rm char}(K)=0$, Igusa proved that $Z_f(s, \chi)$ is a rational  
function of $q^{-s}$ by using the resolution of singularities.
Later on, Denef gave another proof of this remarkable result.
However, if ${\rm char}(K)>0$, the question of rationality of
$Z_f(s, \chi)$ is still kept open. Actually, there are
only a few known results so far. In this paper, we investigate
the local zeta functions of two-variable polynomial $g(x, y)$,
where $g(x, y)=0$ is the superelliptic curve with coefficients
in a non-archimedean local field of positive characteristic.
By using the notable Igusa's stationary phase formula and
with the help of some results due to Denef and
Z${\rm \acute{u}}$${\rm\tilde{n}}$iga-Galindo,
and developing a detailed analysis, we prove the rationality
of these local zeta functions and also describe explicitly
all their candidate poles.
\end{abstract}

\maketitle

\section{\bf Introduction}

Let $K$ be a local field and $f(x)\in K[x]$ be a non-constant
polynomial. The local zeta function of $f$ was first introduced by Weil,
then studied in detail by Igusa, who established many fundamental results
and posed several conjectures about these zeta functions. In this paper,
we are mainly concerned with the rationality of local zeta functions
of superelliptic curves and its poles.

Throughout, we let $K$ be a non-archimedean local field with
$\mathcal{O}_K$ as its ring of integers. Let $\mathcal{O}_K^{\times}$
be the group of units of $\mathcal{O}_K$ and $\mathcal{P}_K$ be the
unique maximal ideal of $\mathcal{O}_K$. We fix an element $\pi\in K$
such that $\mathcal{P}_K=\pi\mathcal{O}_K$.
Let $\mathbb{F}_q\cong\mathcal{O}_K/\mathcal{P}_K$ be the residue field of $K$,
which is a finite field and $q={\rm card}(\mathcal{O}_K/\mathcal{P}_K)$.
For $x\in K^{\times}$, we denote its {\it valuation} over $K$ by
${\rm ord}(x)$ such that ${\rm ord}(\pi)=1$.
Then $x$ can be written uniquely as $ac(x)\pi^{{\rm ord}(x)}$,
where $ac(x)\in\mathcal{O}_K^{\times}$ is called the
{\it angular component} of $x$, and let ${\rm ord}(0):=+\infty$. Moreover,
let $|x|_K:=|x|=q^{-{\rm ord}(x)}$ be the {\it absolute value} of $x$,
and $|dx|$ be the Haar measure on $K^n$ such that the
measure of $\mathcal{O}_K^{n}$ is one. Given a multiplicative character
$\chi: \mathcal{O}_K^{\times}\rightarrow \mathbb{C}^{\times}$ and
putting $\chi(0):=0$, then for any $f(x)\in \mathcal{O}_K[x]$ and
$s\in\mathbb{C}$ with ${\rm Re}(s)>0$, the {\it Igusa's local zeta function},
denoted by $Z_f(s, \chi)$, is defined as
$$Z_f(s, \chi):=\int_{\mathcal{O}_K^{n}}{\chi(ac f(x))|f(x)|^s|dx|}.$$

If ${\rm char}(K)=0$, that is, $K$ is a finite extension of the $p$-adic field,
Igusa \cite{[Igu1]} \cite{[Igu2]} proved that $Z_f(s, \chi)$ is a rational
function of $q^{-s}$ by using the resolution of singularities.
Later on, Denef \cite{[De1]} gave another proof of this important result.

However, when ${\rm char}(K)>0$, the question of rationality is still kept open.
Actually, there are only a few results known so far.
For example, Z${\rm \acute{u}}$${\rm\tilde{n}}$iga-Galindo \cite{[ZG2]} proved that
if $f$ is a polynomial globally non-degenerate with respect to its Newton polyhedra,
then $Z_f(s, \chi)$ is a rational function of $q^{-s}$. The basic tool he used
is called the {\it stationary phase formula} (abbreviated for SPF), which was first
introduced by Igusa \cite{[Igu3]}, then became a powerful tool in the study of
local zeta function in positive characteristics. One can consult \cite{[LIS]},
\cite{[M]} and \cite{[YH]} for more information about Igusa's local zeta functions.

In this paper, we study  the local zeta function of two-variable polynomial
$g(x, y)$, where $g(x, y)=0$ is the so-called {\it superelliptic curve},
that is
$$g(x, y)=y^m-f(x),$$
where ${\rm char}(K)\nmid m$ and $f(x)\in\mathcal{O}_K[x]$
is of degree $n$ such that $f(x)$ splits completely over $K$. Let
\begin{equation}\label{eqno 1.1}
f(x)=\gamma_0\prod_{i=1}^k(x-\gamma_i)^{n_i}
\end{equation}
be the standard factorization of $f$ over $K$ with $\sum_{i=1}^kn_i=n$.
If $m=2$ and $n=3$, then $g(x, y)=0$ is the elliptic curve. In this case,
Meuser and Robinson \cite{[MR]} studied
$Z_g(s, \chi_{\rm triv})$ and determined its explicit form. In particular,
the denominator of $Z_g(s, \chi_{\rm triv})$ is trivial. That is, the only
possible pole of $Z_g(s, \chi_{\rm triv})$ is $-1$. However, for the
general superelliptic curve $g(x, y)=0$, the rationality and candidate
poles of $Z_g(s, \chi)$ are more complicated and still unknown so far.
In the current paper, we study this problem. Actually, we will
prove the rationality of $Z_g(s, \chi)$ and list all their candidate poles.

In what follows, let $K$ be a non-archimedean local field of characteristic $p$,
and $n$ be a positive integer. For any ring $A$, one lets
$A^*:=A\setminus\{0\}$. We define the function ${\it ldeg}$ as
${\rm ldeg}(0):=+\infty$, and ${\rm ldeg}(h):=\min\{i|a_i\ne 0\}$ if
$h(x)=\sum_{i=0}^n a_ix^i\in\mathcal{O}_K[x]$ is a nonzero polynomial. Let
$$\mathcal{A}_K:=\big\{h(x)=\sum_{i=0}^n a_ix^i\in\mathcal{O}_K[x]|a_0\in\mathcal{O}_K^{\times},
a_i\in\mathcal{P}_K\ {\rm for\ any}\ i\ge 1\big\}.$$
Furthermore, if any sum or any product is empty,
we let it equal 0 or 1, respectively.

Now we state the first main result of this paper.

\begin{thm}\label{thm 1.1}
Let $h(x, y)=\mu_1x^d+\mu_2y^m+\pi h_0(x)\in\mathcal{O}_K[x, y]$,
where $\mu_1\in\mathcal{O}_K^{\times}$, $\mu_2\ne 0$ and $d$ is
a nonnegative integer and ${\rm ldeg}(h_0)\ge d+1$.
Let $m$ be an integer such that $m\ge 2$ and $p\nmid m$.
Then we have
\begin{align*}
Z_h(s, \chi)={\left\{\begin{array}{rl}
\dfrac{H_1(q^{-s})}{1-q^{-1-s}},\ \ &{\it if}\ d\le 1,\\
\dfrac{H_2(q^{-s})}{(1-q^{-1-s})(1-q^{-\tilde d-\tilde m
-\tilde d\tilde m\gcd(d, m)s})},\ \ &{\it otherwise},
\end{array}\right.}
\end{align*}
where $\tilde d=d/\gcd(d, m)$, $\tilde m=m/\gcd(d, m)$ and
$H_1(x), H_2(x)\in\mathbb{C}[x]$.
\end{thm}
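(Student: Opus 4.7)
The plan is to apply Igusa's stationary phase formula (SPF) together with a Newton-polyhedron-adapted dilation, using the hypothesis ${\rm ldeg}(h_0)\ge d+1$ to ensure that $\pi h_0(x)$ contributes only higher-order corrections that do not affect the denominator structure.

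First I would exploit the fact that $\pi h_0(x)\in\mathcal{P}_K$ for every $x\in\mathcal{O}_K$, so the reduction mod $\mathcal{P}_K$ is $\bar h=\bar\mu_1 x^d+\bar\mu_2 y^m\in\mathbb{F}_q[x,y]$. For $d\le 1$ the zero set of $\bar h$ in $\mathbb{F}_q^2$ is smooth (when $d=1$, $\partial_x\bar h=\bar\mu_1\ne 0$; when $d=0$, $\bar h$ depends only on $y$ with simple zeros thanks to $p\nmid m$, or is a nonzero constant), so a direct application of SPF will give $Z_h(s,\chi)=H_1(q^{-s})/(1-q^{-1-s})$. For $d\ge 2$ the unique singular point of $\bar h^{-1}(0)$ in $\mathbb{F}_q^2$ is the origin; I would decompose $Z_h(s,\chi)=Z_h^{\mathrm{reg}}(s,\chi)+Z_h^{\mathrm{sing}}(s,\chi)$ with $Z_h^{\mathrm{sing}}$ denoting the integral over $\mathcal{P}_K\times\mathcal{P}_K$. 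Again SPF will give $Z_h^{\mathrm{reg}}=(\text{polynomial in }q^{-s})/(1-q^{-1-s})$.

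For $Z_h^{\mathrm{sing}}$ in the case $d\ge 2$, the key move is the substitution $(x,y)=(\pi^{\tilde m}x_1,\pi^{\tilde d}y_1)$ associated with the compact face of the Newton polyhedron of $\mu_1 x^d+\mu_2 y^m$, together with the two complementary monomial-dominated substitutions on the sub-cones. A direct calculation yields
\[
h(\pi^{\tilde m}x_1,\pi^{\tilde d}y_1)=\pi^{\tilde m\tilde d\gcd(d,m)}\bigl(\mu_1 x_1^d+\mu_2 y_1^m+\pi\,h_0^{(1)}(x_1)\bigr),
\]
where $h_0^{(1)}\in\mathcal{O}_K[x_1]$ satisfies ${\rm ldeg}(h_0^{(1)})\ge d+1$ (thanks to ${\rm ldeg}(h_0)\ge d+1$). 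Iterating this dilation (or solving the induced self-similar integral equation) produces a geometric series with ratio $q^{-\tilde m-\tilde d-\tilde m\tilde d\gcd(d,m)\,s}\chi(ac(\pi))^{\tilde m\tilde d\gcd(d,m)}$, yielding the second denominator factor. Combining with the monomial-dominated pieces (which contribute only to the $1/(1-q^{-1-s})$ factor) will give the claimed formula.

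\textbf{Main obstacle.} The crux will be the $d\ge 2$ case: one must verify that the iterated dilation preserves the form of the polynomial (each successive $h_0^{(j)}$ still satisfies ${\rm ldeg}\ge d+1$), and that the contribution of the compact face on the unit-coordinate locus $(x_1,y_1)\in(\mathcal{O}_K^{\times})^2$ is a polynomial in $q^{-s}$ independent of the iteration index. This should be guaranteed by ${\rm ldeg}(h_0)\ge d+1$ together with $p\nmid m$ and $\bar\mu_1\ne 0$, which give non-degeneracy of $\bar h$ on the compact face. The case $\bar\mu_2=0$ (i.e.\ $v(\mu_2)\ge 1$) should be handled by an auxiliary argument reducing to a lower-valuation situation via peeling off a factor of $\pi$ from $\mu_2$.
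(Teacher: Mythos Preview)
Your treatment of $d\le 1$ matches the paper's. For $d\ge 2$ the paper takes a different and cleaner route: rather than setting up a self-similar recursion, it proves directly that $Z_h(s,\chi)=Z_{h_1}(s,\chi)$ with $h_1(x,y)=\mu_1 x^d+\mu_2 y^m$, and then invokes Z\'u\~niga-Galindo's theorem (Lemma~\ref{lem 2.3}) for the globally non-degenerate polynomial $h_1$. The equality $Z_h=Z_{h_1}$ is established via the Newton-polyhedron partition (the five faces $\gamma_1,\dots,\gamma_5$ of (\ref{2.3})), which is the same decomposition underlying your plan: on each cell one lands on an integral over $(\mathcal{O}_K^\times)^2$ of a polynomial $\nu_1 x^d+\nu_2 y^m+\pi(\text{tail})$, and Lemma~\ref{lem 2.11} shows the tail is irrelevant because $L(\nu_1 x^d+\nu_2 y^m,P)=0$ for $P\in(\mathcal{O}_K^\times)^2$. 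Your iterated-dilation argument can be made to work, but the step you flag as the main obstacle---that the $(\mathcal{O}_K^\times)^2$-contribution is independent of the iteration index---requires exactly this lemma, after which you would in effect be re-deriving the non-degenerate case of Lemma~\ref{lem 2.3} by hand. The paper's reduction to $h_1$ buys you that result for free.

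One concrete correction to your sketch: when $\bar\mu_2=0$ the singular locus of $\bar h=\bar\mu_1 x^d$ in $\mathbb{F}_q^2$ is $\{0\}\times\mathbb{F}_q$, not just the origin, so after SPF the residual integral is over $\mathcal{P}_K\times\mathcal{O}_K$, not $\mathcal{P}_K\times\mathcal{P}_K$. Moreover ``peeling off a factor of $\pi$ from $\mu_2$'' is not an available operation, since $\mu_2$ is fixed; what one can do is substitute $x\mapsto\pi x$ repeatedly (raising the valuation of the $x^d$-term) until it matches or exceeds ${\rm ord}(\mu_2)$, and only then run the face dilation. The paper's approach sidesteps this case distinction entirely, because the comparison $Z_h=Z_{h_1}$ via Lemma~\ref{lem 2.11} is uniform in ${\rm ord}(\mu_2)$ (the unit/non-unit split for $\mu_2$ appears only inside the one-line check that $L(h_1,P)=0$). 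As a minor aside, $ac(\pi)=1$, so the factor $\chi(ac(\pi))^{\tilde m\tilde d\gcd(d,m)}$ in your geometric ratio is trivially $1$.
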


Using Theorem \ref{thm 1.1}, one can prove the second main result
of this paper as follows.

\begin{thm}\label{thm 1.2}
Let $g(x, y)=y^m-f(x)$ with the factorization {\rm (\ref{eqno 1.1})}
such that $m\ge 2$ and $p\nmid m$.
Then the local zeta function $Z_g(s, \chi)$ is a rational function of $q^{-s}$.
More explicitly, we have
$$Z_g(s, \chi)=\dfrac{G(q^{-s})}{(1-q^{-1-s})
\prod\limits_{i\in T,\atop n_i\ge 2}(1-q^{-\tilde n_i-m_i-\tilde n_im_i\gcd(n_i, m)s})},$$
where $T:=\{i|1\le i\le k, \gamma_i\in\mathcal{O}_K\}$,
$\tilde n_i=n_i/\gcd(n_i, m)$, $m_i=m/\gcd(n_i, m)$ and $G(x)\in\mathbb{C}[x]$.
\end{thm}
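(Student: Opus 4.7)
The plan is to decompose $Z_g(s,\chi)$ using the partition of $\mathcal{O}_K$ into residue classes of $x$ modulo $\pi$, and to reduce the resulting local integrals---via suitable translations and rescalings---to the form handled by Theorem~\ref{thm 1.1}. The separation of variables in $g(x,y)=y^m-f(x)$ means that the combinatorial data controlling the poles of $Z_g$ are encoded entirely in the root structure of $f$.

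Writing $\mathcal{O}_K=\bigsqcup_{\bar a\in\mathbb{F}_q}(a+\pi\mathcal{O}_K)$ and substituting $x=a+\pi x'$ yields
\[
Z_g(s,\chi)=q^{-1}\sum_{\bar a\in\mathbb{F}_q}\int_{\mathcal{O}_K^2}\chi\bigl(g(a+\pi x',y)\bigr)\bigl|g(a+\pi x',y)\bigr|^s|dx'||dy|.
\]
For $\bar a$ not equal to any $\bar\gamma_i$ with $i\in T$, the polynomial $f$ is a unit on $a+\pi\mathcal{O}_K$ and $\bar g(x',y)=\bar y^m-\bar f(\bar a)$ has no singular zero (using $p\nmid m$); Igusa's stationary phase formula then gives a contribution that is a rational function of $q^{-s}$ with only $1-q^{-1-s}$ as a denominator factor.

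For each singular residue $\bar a=\bar\gamma_{i_0}$, set $S_{\bar a}:=\{j\in T:\bar\gamma_j=\bar a\}$, pick $i_0\in S_{\bar a}$, and translate $x=\gamma_{i_0}+\pi x'$. From $f(x)=\gamma_0(x-\gamma_{i_0})^{n_{i_0}}\prod_{j\ne i_0}(x-\gamma_j)^{n_j}$ one obtains
\[
f(\gamma_{i_0}+\pi x')=(\pi x')^{n_{i_0}}\cdot\gamma_0\prod_{j\ne i_0}(\pi x'+\gamma_{i_0}-\gamma_j)^{n_j}.
\]
When $|S_{\bar a}|=1$, a Taylor expansion at $x'=0$ shows that this factors as $\pi^{n_{i_0}+v}\bigl(\mu_1(x')^{n_{i_0}}+\pi h_0(x')\bigr)$ with $\mu_1\in\mathcal{O}_K^\times$, ${\rm ldeg}(h_0)\ge n_{i_0}+1$, and $v$ a fixed integer. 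Splitting the $y$-integration into $y\in\pi^c\mathcal{O}_K$ (with $c$ chosen so that $cm>n_{i_0}+v$) and its complement, and rescaling $y=\pi^c y'$ on the former, transforms the main piece of the integral into a constant times $Z_h(s,\chi)$ for some $h$ fitting Theorem~\ref{thm 1.1} with $d=n_{i_0}$, while the complement yields only elementary rational contributions. Thus this piece contributes the denominator factor $1-q^{-\tilde n_{i_0}-m_{i_0}-\tilde n_{i_0}m_{i_0}\gcd(n_{i_0},m)s}$ whenever $n_{i_0}\ge 2$.

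When $|S_{\bar a}|\ge 2$, the product over $j\in S_{\bar a}\setminus\{i_0\}$ introduces additional $x'$-zeros, so I would iterate the partition mod $\pi$ on $x'$ (and on successive variables if necessary). This yields a finite recursion tree of depth at most $\max_{i,j\in S_{\bar a}}{\rm ord}(\gamma_i-\gamma_j)$; at each leaf, one cluster member $\gamma_i$ has become isolated and Theorem~\ref{thm 1.1} applies with $d=n_i$, contributing the factor $1-q^{-\tilde n_i-m_i-\tilde n_i m_i\gcd(n_i,m)s}$ whenever $n_i\ge 2$. Summing the leaf contributions together with the non-singular pieces produces the claimed formula. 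The principal technical obstacle is verifying, level by level in the recursion, that the transformed polynomial (after rescalings in $x$ and $y$) fits the precise hypothesis ${\rm ldeg}(h_0)\ge d+1$ of Theorem~\ref{thm 1.1} with $\mu_1\in\mathcal{O}_K^\times$; once this bookkeeping is in place, the product of the leaf denominators matches the form claimed in Theorem~\ref{thm 1.2}.
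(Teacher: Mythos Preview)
Your outline matches the paper's proof closely: decompose by residue classes of $x$ modulo $\pi$, dispose of non-singular classes via the stationary phase formula, and for each cluster $S_{\bar a}$ of integral roots sharing a residue translate, rescale, and recurse until every root is isolated, at which point Theorem~\ref{thm 1.1} applies. The paper organizes the recursion as an induction on $|T|$ and packages your ``split the $y$-integration and rescale'' step as a standalone lemma (Lemma~\ref{lem 2.6}), invoking it both at the outset and at every level of the induction to balance stray powers of $\pi$ between the $x$-part and the $y$-part.

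There is, however, one claim in your outline that fails as written. You assert that for $\bar a$ distinct from every $\bar\gamma_i$ with $i\in T$ the value $f(a)$ is a unit, so that one pass of the stationary phase formula disposes of these classes. This holds only when
\[
v:={\rm ord}(\gamma_0)+\sum_{i\notin T}n_i\,{\rm ord}(\gamma_i)
\]
vanishes; Lemma~\ref{lem 2.7} guarantees only $v\ge 0$, and already $f(x)=\pi(x-1)^2$ gives $v=1$ and $\bar f\equiv 0$. In that situation $\bar g=\bar y^m$ has singular locus $\{\bar y=0\}$ on \emph{every} residue class, so a single application of SPF does not terminate. The paper handles this by applying Lemma~\ref{lem 2.6} once to $g$ \emph{before} decomposing into residue classes, replacing $g$ by a polynomial $g_1$ of the same shape whose $x$-part is now led by a unit; your own $y$-splitting device is exactly this lemma, so the repair is simply to invoke it up front (and again after each translation-and-rescaling in the recursion, as the paper does) rather than only inside the clusters with $|S_{\bar a}|=1$.
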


\begin{rem}
We should point out that the polynomials treated in \cite{[ZG2]} are the
so-called non-degenerate polynomials. But the polynomials studied in
Theorems \ref{thm 1.1} and \ref{thm 1.2} may be degenerate.
For instance, let $h(x, y)=x^2(\pi x-1)^2+y^m$ with $m$ being a positive integer
such that $m\ge 2$ and $p\nmid m$. It is clear that $h$ satisfies the condition
of Theorems \ref{thm 1.1} and \ref{thm 1.2}. However, if we let $\tilde h(x):=x^2(\pi x-1)^2$,
then for any $a\in K^*$, $(\pi^{-1}, a)\in(K^*)^2$ is a singular point
of $\tilde h$. Thus $h$ is degenerate with respect to its Newton
polyhedra (see Definition 2.2 below). So one cannot make use of
Theorem A of \cite{[ZG2]} directly.
\end{rem}

The paper is organized as follows. In Section 2,
we first review some known results on Newton polyhedra
and state a result due to Z${\rm \acute{u}}$${\rm\tilde{n}}$iga-Galindo.
Then we introduce the SPF and use it to prove a lemma (Lemma \ref{lem 2.6}
below) which will be used in what follows. Moreover, we show
several other lemmas needed in the proofs of our main results. 
In concluding Section 2, we show a result (Lemma \ref{lem 2.11} below) 
which plays an important role in the proof of Theorem \ref{thm 1.1}. 
Finally, in Sections 3 and 4, we use the lemmas presented 
in previous section to give the proofs of Theorems \ref{thm 1.1} 
and \ref{thm 1.2}, respectively.

\section{\bf Preliminaries and lemmas }

\subsection{\bf Newton polyhedra and Z${\rm \acute{u}}$${\rm\tilde{n}}$iga-Galindo's theorem}

We begin with the definition of Newton polyhedra for two-variable polynomials.

Let $\mathbb{R}_{+}:=\{x\in \mathbb{R}|x\geq 0\}$ and
$f(x)=\sum_{l}{a_lx^l}\in K[x]$ be a two-variable polynomial
satisfying $f(0)=0$ with the notation
$$a_lx^l=a_{l_1, l_2}x_1^{l_1}x_2^{l_2}\ \ {\rm for\ any} \ l=(l_1, l_2)\in\mathbb{N}^2.$$
The {\it support set} of $f$ is denoted by
${\rm supp}(f):=\{l\in \mathbb{N}^2|a_l\neq 0\}.$
Then we define the {\it Newton polyhedra} $\Gamma(f)$ of $f$
to be the convex hull in $\mathbb{R}_{+}^2$ of the set
$\bigcup_{l\in {\rm supp}(f)}{(l+\mathbb{R}_{+}^2)}.$

We call $\gamma$ a proper {\it face} of $\Gamma(f)$ if $\gamma$ is a non-empty convex set
which is obtained by intersecting $\Gamma(f)$ with an affine hyperplane $H$,
such that $\Gamma(f)$ is contained in one of two half-plane determined by $H$.
The hyperplane $H$ is called the {\it supporting hyperplane} of $\gamma$. Let
$a_{\gamma}=(a_1, a_2)\in \mathbb{N}^2\setminus\{0\}$
be the vector which is perpendicular to $H$ and let $|a_{\gamma}|:=a_1+a_2$.
A face $\gamma$ of codimension one is named {\it facet}.

Let $\langle , \rangle$ denote the usual {\it inner product} of $\mathbb{R}^2$.
For any $a\in \mathbb{R}^2_{+}$, let
$$m(a):=\inf_{b\in \Gamma(f)}{\{\langle a, b\rangle \}},$$
and for any $a\in \mathbb{R}^2_{+}\setminus \{0\}$, {\it the first meet locus} of $a$
is denoted by $F(a)$ defined as
$$F(a):=\{x\in \Gamma(f)|\langle a, x\rangle=m(a)\}.$$
In fact, $F(a)$ is a proper face of $\Gamma(f)$. Moreover, there exists an
equivalence relation $\simeq$ on $\mathbb{R}^2_{+}\setminus \{0\}$:
For any $a, \tilde a\in \mathbb{R}^2_{+}\setminus \{0\}$, we have that
$a\simeq \tilde a \ {\rm if\ and\ only\ if}\ F(a)=F(\tilde a).$
Furthermore, let $\gamma$ be a proper face of $\Gamma(f)$,
we define {\it the cone associated to $\gamma$} as
$$\Delta_{\gamma}:=\{a\in \mathbb{R}^2_{+}\setminus \{0\}|F(a)=\gamma\}.$$
It is obvious that $\Delta_{\gamma}\cap\Delta_{\gamma^{'}}=\emptyset$
for different proper faces $\gamma, \gamma^{'}$ of $\Gamma(f)$.
Thus one has the following partition of $\mathbb{R}^2_{+}$:
\begin{equation*}
\mathbb{R}^2_{+}=\big\{0\big\}\bigcup\big(\bigcup_{\gamma}\Delta_{\gamma}\big),
\end{equation*}
where $\gamma$ runs over all proper faces of $\Gamma(f)$.
Then it follows that
\begin{equation}\label{eqno 2.1}
\mathbb{N}^2=\big\{0\big\}\bigcup\Big(\bigcup_{\gamma}
\big(\Delta_{\gamma}\bigcap (\mathbb{N}^2\setminus \{0\})\big)\Big).
\end{equation}
Let $C$ be any set with $C\subseteq\mathbb{N}^2$, we define the set
$E(C)$ associated to $C$ as
$$E(C):=\big\{(x_1, x_2)\in \mathcal{O}_K^{2}|
\big({\rm ord}(x_1), {\rm ord}(x_2)\big)\in C\big\}.$$
Then by (\ref{eqno 2.1}), one has
\begin{align}\label{2.2}
Z_f(s, \chi)=Z_f\big(s, \chi, (\mathcal{O}_K^{\times})^2\big)
+\sum_{\gamma}Z_f\Big(s, \chi, E\big(\Delta_{\gamma}\bigcap (\mathbb{N}^2\setminus\{0\})\big)\Big),
\end{align}
where $\gamma$ runs over all the proper faces of $\Gamma(f)$.
The following lemma is due to Denef and describes the structure of $\Delta_{\gamma}$.

\begin{lem}\label{lem 2.1}\cite{[De2]}
Let $\gamma$ be a proper face of $\Gamma(f)$, and $\omega_1, \omega_2,\cdots, \omega_e$
be the facets of $\Gamma(f)$ which contain $\gamma$.
Denote by $\alpha_1, \alpha_2,\cdots, \alpha_e$ the vectors which
are perpendicular to $\omega_1, \omega_2,\cdots, \omega_e$, respectively.
Then $\Delta_{\gamma}=\big\{\sum_{i=1}^ea_i\alpha_i|a_i\in \mathbb{R}^{+}\big\}$.
\end{lem}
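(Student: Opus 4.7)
The plan is to prove both set inclusions using the convex-geometric characterization of $\Delta_\gamma$ as the relative interior of the normal cone to $\Gamma(f)$ at $\gamma$, together with the fact (elementary in dimension two) that every proper face of $\Gamma(f)$ equals the intersection of the facets containing it. Since $\Gamma(f)\subset\mathbb{R}^2_+$ is a two-dimensional unbounded polyhedron, a proper face $\gamma$ is either a facet (a bounded edge or a semi-infinite ray), in which case $e=1$, or a vertex, in which case it is the meeting point of exactly two facets, so $e=2$.

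For the inclusion $\big\{\sum_{i=1}^e a_i\alpha_i:a_i>0\big\}\subseteq\Delta_\gamma$ I would take $a=\sum_i a_i\alpha_i$ with $a_i>0$ and bound, for any $x\in\Gamma(f)$,
$$\langle a,x\rangle=\sum_{i=1}^e a_i\langle\alpha_i,x\rangle\ge\sum_{i=1}^e a_im(\alpha_i).$$
Any $x_0\in\gamma\subseteq\bigcap_i\omega_i=\bigcap_i F(\alpha_i)$ attains equality, so $m(a)=\sum_i a_i m(\alpha_i)$. For a general $x$, equality forces $\langle\alpha_i,x\rangle=m(\alpha_i)$ for every $i$, because each $a_i$ is strictly positive; equivalently $x\in\bigcap_i\omega_i=\gamma$. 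Hence $F(a)=\gamma$, so $a\in\Delta_\gamma$.

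For the reverse inclusion I would split by the dimension of $\gamma$. If $\gamma$ is a facet then $e=1$, the functional $\langle a,\cdot\rangle$ is constant on $\gamma$, and since $a\in\mathbb{R}^2_+\setminus\{0\}$ must be perpendicular to $\gamma$ it is forced to be a positive multiple of the integer normal $\alpha_1$. If $\gamma=\{v\}$ is a vertex, I would analyze $\Gamma(f)$ locally at $v$: near $v$ the polyhedron is the intersection of the two closed half-planes supported by $\omega_1$ and $\omega_2$, so its normal cone at $v$ is exactly $\mathbb{R}_{\ge 0}\alpha_1+\mathbb{R}_{\ge 0}\alpha_2$. A vector $a\in\Delta_\gamma$ satisfies $F(a)=\{v\}$, so $a$ lies in this cone but not in either of its bounding rays (where $F(a)$ would be $\omega_1$ or $\omega_2$); hence $a=c_1\alpha_1+c_2\alpha_2$ with $c_1,c_2>0$.

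The main obstacle is the vertex case of the reverse inclusion: one must identify the local normal cone of the unbounded polyhedron $\Gamma(f)$ at $v$ with $\mathbb{R}_{\ge 0}\alpha_1+\mathbb{R}_{\ge 0}\alpha_2$, and then argue that strict positivity of the coefficients is equivalent to $F(a)$ collapsing to the single vertex $v$. These are routine two-dimensional convex-geometric computations, but some care is needed when one of the $\omega_i$ is a semi-infinite ray parallel to a coordinate axis, since then the corresponding $\alpha_i$ is itself a coordinate vector; the argument nevertheless goes through unchanged.
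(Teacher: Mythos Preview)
The paper does not prove this lemma at all: it is quoted as a known result of Denef \cite{[De2]} and is used as a black box in the subsequent computation of the cones $\Delta_{\gamma_i}$ for the explicit Newton polyhedron of $h(x,y)=\mu_1x^d+\mu_2y^m+\pi h_0(x)$. So there is no ``paper's own proof'' to compare against.

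Your argument is a correct, self-contained two-dimensional proof. The forward inclusion is clean and works in any dimension. For the reverse inclusion your case split (facet versus vertex) is exactly what is available in $\mathbb{R}^2$, and the identification of the normal cone at a vertex $v$ with $\mathbb{R}_{\ge 0}\alpha_1+\mathbb{R}_{\ge 0}\alpha_2$ is the standard fact that the normal cone at a vertex of a planar polyhedron is generated by the normals of the two incident edges; once that is granted, ruling out $c_i=0$ by observing that $F(c_j\alpha_j)=\omega_j\supsetneq\{v\}$ finishes the argument. The one place where you could be slightly more explicit is in justifying $\bigcap_{i=1}^e\omega_i=\gamma$ in the forward direction: for a facet this is tautological, and for a vertex it is the statement that two distinct edges of a planar polyhedron meet in at most one point, which is immediate. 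Since the paper only needs the lemma for a single explicit polyhedron with at most five faces (three edges and two vertices, listed in (\ref{2.3})), your level of generality is already more than what is required.
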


Let $h(x, y)=\mu_1x^d+\mu_2y^m+\pi h_0(x)$ with $d\ge 1$ and ${\rm ldeg}(h_0)\ge d+1$.
First one can easily derive that
\begin{equation*}
\Gamma(h)=\big\{(x, y)|x\ge d, y\ge m, mx+dy\ge dm\big\}.
\end{equation*}
Moreover, $\Gamma(h)$ has exact five proper faces, that is,
\begin{align}\label{2.3}
&\gamma_1=\{(x, 0)|x\ge d\},\ \ \gamma_2=\{(0, y)|y\ge m\},\nonumber\\
&\gamma_3=\big\{(x, y)|mx+dy=dm,\ 0\le x\le d, 0\le y\le m\big\},\nonumber\\
&\gamma_4=(d, 0),\ \ \gamma_5=(0, m).
\end{align}
For facets $\gamma_1$, $\gamma_2$ and $\gamma_3$, we choose
\begin{align}\label{2.3'}
\alpha_1=(0, 1), \alpha_2=(1, 0) \ {\rm and} \ \alpha_3=(\tilde m, \tilde d)
\end{align}
to be the vectors which are perpendicular to $\gamma_1$, $\gamma_2$ and $\gamma_3$,
respectively, where $\tilde d=d/\gcd(d, m)$ and $\tilde m=m/\gcd(d, m)$.
Then Lemma \ref{lem 2.1} gives us that $\Delta_{\gamma_i}=\{a\alpha_i|a\in\mathbb{R}^{+}\}$
for $i=1, 2, 3$ and
$$
\Delta_{\gamma_4}=\{a\alpha_1+b\alpha_3|a, b\in\mathbb{R}^{+}\},\ \ \ \
\Delta_{\gamma_5}=\{a\alpha_2+b\alpha_3|a, b\in\mathbb{R}^{+}\}.
$$
Since $\gcd(\tilde d, \tilde m)=1$, so for $i=1, 2, 3$, we have
\begin{equation}\label{eqno 2.4}
\Delta_{\gamma_i}\bigcap (\mathbb{N}^2\setminus \{0\})
=\{a\alpha_i|a\in\mathbb{Z}^+\}.
\end{equation}
Moreover, for $i=4, 5$, let $S_4=\{1,3\}$ and $S_5=\{2, 3\}$. Then
\begin{equation}\label{eqno 2.5}
\Delta_{\gamma_i}\bigcap (\mathbb{N}^2\setminus \{0\})
=\bigcup\limits_{c\in \tilde S_i}
\Big\{c+\sum_{j\in S_i}a_j\alpha_j|a_j\in\mathbb{Z}^+\Big\},
\end{equation}
where
$$\tilde S_i=\mathbb{N}^2\bigcap\Big\{\sum_{j\in S_i}\lambda_j\alpha_j|0\le \lambda_j<1\Big\}.$$

Now we introduce a well-known definition (see, for example \cite{[ZG2]}).

\begin{defn}
A polynomial $f(x)=\sum_{i}{a_i}x^i\in K[x]$ is called {\it globally non-degenerate
with respect to its Newton polyhedra $\Gamma(f)$} if it satisfies the following
two properties:

(1). The origin of $K^n$ is a singular point of $f(x)$. Namely, one has
$$f(0,...,0)=\dfrac{\partial f}{\partial x_1}(0,...,0)=\cdots=\dfrac{\partial f}{\partial x_n}(0,...,0)=0.$$

(2). For every face $\gamma\subset\Gamma(f)$ (including $\Gamma(f)$ itself),
the polynomial
$$f_{\gamma}(x):=\sum_{i\in \gamma}{a_i}x^i$$
has the property that there is no $x\in (K^*)^n$ such that
$x$ is a singular point of $f_{\gamma}$.
\end{defn}

As a conclusion of this subsection, we state a result of
Z${\rm \acute{u}}$${\rm\tilde{n}}$iga-Galindo as follows.


\begin{lem} {\rm\cite{[ZG2]}}\label{lem 2.3}
Let K be a non-archimedean local field, and let $f(x)\in \mathcal{O}_K[x]$
be a polynomial globally non-degenerate with respect to its Newton polyhedra
$\Gamma(f)$. Then the Igusa's local zeta function $Z_f(s, \chi)$ is a rational
function of $q^{-s}$. Furthermore, if $s$ is a pole of $Z_f(s, \chi)$, then
$$s=-\dfrac{|a_{\gamma}|}{m(a_{\gamma})}
+\dfrac{2\pi i}{\log q}\dfrac{k}{m(a_{\gamma})},\ k\in \mathbb{Z}$$
for some facet $\gamma$ of $\Gamma(f)$ with perpendicular $a_{\gamma}$ if
$m(a_{\gamma})\neq 0$, and
$$s=-1+\dfrac{2\pi i}{\log q}k,\ k\in \mathbb{Z}$$
otherwise.
\end{lem}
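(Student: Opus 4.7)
The plan is to follow the strategy of Zúñiga-Galindo: partition $\mathcal{O}_K^n$ via the cone decomposition coming from the Newton polyhedron $\Gamma(f)$, perform a monomial change of variables on each cone, use the stationary phase formula (SPF) to reduce the resulting integrals to polynomial objects, and then collect everything into a rational function whose denominator produces exactly the claimed list of candidate poles.

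First I would write
\[
Z_f(s,\chi) = Z_f\bigl(s,\chi,(\mathcal{O}_K^{\times})^n\bigr) + \sum_{\gamma} Z_f\Bigl(s,\chi,E\bigl(\Delta_{\gamma}\cap(\mathbb{N}^n\setminus\{0\})\bigr)\Bigr),
\]
where $\gamma$ ranges over the proper faces of $\Gamma(f)$, in analogy with \eqref{2.2}. The unit piece is treated directly by Igusa's SPF: non-degeneracy forces the reduction $\bar f$ to define a scheme whose singular locus in $(\mathbb{F}_q^\times)^n$ is empty, so the SPF terminates in one step and yields a polynomial in $q^{-s}$ times a single factor $1/(1-q^{-1-s})$ coming from the fiber along $\bar f=0$.

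Next, for each proper face $\gamma$ with facets $\omega_1,\dots,\omega_e$ containing $\gamma$, Lemma \ref{lem 2.1} writes $\Delta_\gamma = \{\sum_i a_i\alpha_i : a_i\in\mathbb{R}^{+}\}$ with $\alpha_i$ perpendicular to $\omega_i$. After a simplicial subdivision if needed, I would parametrize $\Delta_{\gamma}\cap(\mathbb{N}^n\setminus\{0\})$ as a finite disjoint union of shifted open simplicial cones $c+\sum_{i}\mathbb{Z}^{+}\alpha_i$, with $c$ running over representatives in a fundamental parallelepiped (exactly as in \eqref{eqno 2.4}--\eqref{eqno 2.5}). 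On each such cell the change of variables $x_j = \pi^{\,a(\alpha)_j}u_j$ with $u_j\in\mathcal{O}_K^{\times}$ gives
\[
f(x) = \pi^{m(a(\alpha))}\bigl(f_{\gamma}(u)+\pi\,R_{\alpha}(u,\pi)\bigr),
\]
with Jacobian contributing $q^{-|a(\alpha)|}$. Summing the geometric series in each $a_i\in\mathbb{Z}^{+}$ produces, for every facet $\omega_i$, a factor
\[
\frac{q^{-|\alpha_i|-m(\alpha_i)s}}{1-q^{-|\alpha_i|-m(\alpha_i)s}}
\]
when $m(\alpha_i)\neq 0$, and $\frac{q^{-1}}{1-q^{-1-s}}$ when $m(\alpha_i)=0$ (which happens precisely for facets lying in coordinate hyperplanes).

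The main obstacle is the residual factor
\[
J_{\gamma}(s,\chi) := \int_{(\mathcal{O}_K^{\times})^n}\chi\bigl(\mathrm{ac}\,(f_{\gamma}(u)+\pi R_\alpha(u,\pi))\bigr)\bigl|f_{\gamma}(u)+\pi R_\alpha(u,\pi)\bigr|^{s}|du|,
\]
which must be shown to be a polynomial in $q^{-s}$. This is exactly where positive characteristic creates difficulty and where the SPF is essential. Non-degeneracy of $\Gamma(f)$ implies the face polynomial $f_\gamma$ has no singular points on $(K^\times)^n$, hence its reduction $\bar f_\gamma$ is smooth on the torus $(\mathbb{F}_q^\times)^n$; the SPF then expresses $J_\gamma$ as a finite $\mathbb{F}_q$-sum (over the smooth locus) plus a term supported where the truncation of $\bar f_\gamma$ vanishes to high order — and non-degeneracy forces this bad locus to be empty, so the SPF recursion terminates and $J_\gamma(s,\chi)\in\mathbb{C}[q^{-s}]$. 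Reassembling, $Z_f(s,\chi)$ becomes a finite sum of polynomials in $q^{-s}$ divided by products of factors $(1-q^{-|\alpha|-m(\alpha)s})$ over facet perpendiculars $\alpha$, which yields rationality and exhibits the candidate poles as exactly the zeros of these denominators, namely $s=-|a_\gamma|/m(a_\gamma)+2\pi i k/(m(a_\gamma)\log q)$ when $m(a_\gamma)\neq 0$, and $s=-1+2\pi i k/\log q$ otherwise.
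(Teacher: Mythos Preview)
The paper does not prove this lemma at all; it is quoted verbatim as a result of Z\'u\~niga-Galindo \cite{[ZG2]} and used as a black box (notably in the proof of Theorem~\ref{thm 1.1}, where it is applied to the non-degenerate polynomial $h_1(x,y)=\mu_1x^d+\mu_2y^m$). So there is no ``paper's own proof'' to compare against.

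Your sketch does follow the strategy of \cite{[ZG2]}, and the overall architecture (cone decomposition, monomial change of variables, SPF on the torus pieces) is correct. Two points deserve tightening, however. First, the claim that $J_\gamma(s,\chi)\in\mathbb{C}[q^{-s}]$ is not accurate: applying SPF once on $(\mathcal{O}_K^\times)^n$ with empty singular locus yields $v+\sigma\cdot\dfrac{(1-q^{-1})q^{-s}}{1-q^{-1-s}}$, which already carries the denominator $1-q^{-1-s}$. This is in fact the true origin of the pole at $s=-1$; your attribution of that pole to facets with $m(\alpha_i)=0$ via a factor $\frac{q^{-1}}{1-q^{-1-s}}$ is mistaken, since when $m(\alpha_i)=0$ the geometric series gives $\frac{q^{-|\alpha_i|}}{1-q^{-|\alpha_i|}}$, constant in $s$. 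Second, and more seriously, for the geometric series to sum cleanly you need $J_\gamma$ to be \emph{independent of the lattice point $a$ in the cone}, not merely finite. Modulo $\pi$ the perturbation $\pi R_\alpha$ disappears, so for $\chi=\chi_{\rm triv}$ this is automatic; but for nontrivial $\chi$ the $v$-term in SPF depends on $f_\gamma+\pi R_\alpha$ modulo $\pi^{c_\chi}$, and $R_\alpha$ genuinely varies with $a$. Z\'u\~niga-Galindo handles this with his index bound $C(f_\gamma,(\mathcal{O}_K^\times)^n)$ (recorded here as Lemma~\ref{lem 2.10}), which lets one replace $f_\gamma+\pi R_\alpha$ by $f_\gamma$ uniformly. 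Your sketch invokes non-degeneracy at the right moment but skips this uniformity step, which is the crux in positive characteristic.
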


\subsection{\bf Some lemmas}

In this subsection, we present some lemmas which will be used later.
At first, we recall the so-called SPF.
For any $x\in\mathcal{O}_K^n$, let $\bar{x}$ be the image of $x$ under
the canonical homomorphism
$\mathcal{O}_K^n\rightarrow (\mathcal{O}_K/\mathcal{P}_K)^n\cong\mathbb{F}_q^n$.
For $f(x)\in \mathcal{O}_K[x]$, $\bar{f} (x)$ stands for the polynomial
obtained by reducing modulo $\pi$ the coefficients of $f(x)$.
Let $A$ be any ring and $f(x)\in A[x]$. We define $V_f(A):=\{x\in A^n|f(x)=0\}$.
By ${\rm Sing}_f(A)$ we denote the set of $A$-value singular points of $V_f$,
namely,
$${\rm Sing}_f(A):=\Big\{x\in A^n\Big|f(x)
=\dfrac{\partial f}{\partial x_1}(x)=\cdots
=\dfrac{\partial f}{\partial x_n}(x)=0\Big\}.$$

We fix a lifting $R$ of $\mathbb{F}_q$ in $\mathcal{O}_K$. That is,
the set $R^n$ is mapped bijectively onto $\mathbb{F}_q^n$ by the
canonical homomorphism.
Let $\bar{D}$ be a subset of $\mathbb{F}_q^n$ and $D$ be its preimage
under the canonical homomorphism. We also denote by $S(f, D)$ the subset
of $R^n$ mapped bijectively to the set ${\rm Sing}_{\bar{f}}(\mathbb{F}_q)\cap \bar{D}$.
If $D=\mathcal{O}_K^n$, we use notation $S(f)$ instead of $S(f, \mathcal{O}_K^n)$.
Furthermore, we denote
\begin{align*}
v(\bar{f}, D, \chi):={\left\{\begin{array}{rl}
q^{-n} \cdot \#\{\bar{P}\in \bar{D}|\bar{P}\notin V_{\bar{f}}(\mathbb{F}_q)\}, \ \ \ \
&{\rm if} \ \chi=\chi_{{\rm triv}},\\
q^{-nc_{\chi}}\sum\limits_{\{P\in D|\bar{P}\notin V_{\bar{f}}(\mathbb{F}_q)\}
{\rm mod}\ \mathcal{P}_K^{c_{\chi}}}{\chi(ac f(P)),}\  \ \ \
&{\rm otherwise},
\end{array}\right.}
\end{align*}
where $c_{\chi}$ is the conductor of $\chi$, and
\begin{align*}
\sigma(\bar{f}, D, \chi):={\left\{\begin{array}{rl}
q^{-n}\cdot \#\{\bar{P}\in \bar{D}|\bar{P}\ {\rm is\ a\ nonsingular\ point\ of} \
V_{\bar{f}}(\mathbb{F}_q)\}, \ \  &{\rm if} \ \chi=\chi_{{\rm triv},}\\
0,\ \   & {\rm otherwise}.
\end{array}\right.}
\end{align*}
If $D=\mathcal{O}_K^n$, we write $v(f, \chi)$ and $\sigma(f, \chi)$
for simplicity. Finally, let
$$Z_f(s, \chi, D):=\int_D\chi(acf(x))|f(x)|^s|dx|.$$

Now we can state the SPF in the following form.

\begin{lem}\cite{[Igu3]} \cite{[YH]}\label{lem 2.4}
For any complex number $s$ with ${\rm Re}(s)>0$, we have
\begin{align*}
Z_f(s, \chi, D)=v(\bar{f}, D, \chi)&+\sigma(\bar{f}, D, \chi)
\dfrac{(1-q^{-1})q^{-s}}{1-q^{-1-s}}+Z_f(s, \chi, D_{S(f, D)}),
\end{align*}
where $D_{S(f, D)}:=\bigcup_{P\in S(f, D)}D_P$ with
$D_P:=\{x\in\mathcal{O}_K^n|x-P\in\mathcal{P}_K^n\}$.
That is, $D_{S(f, D)}$ is the preimage of
${\rm Sing}_{\bar{f}}(\mathbb{F}_q)\cap \bar{D}$
under the canonical homomorphism
$\mathcal{O}_K^n\rightarrow (\mathcal{O}_K/\mathcal{P}_K)^n$.
\end{lem}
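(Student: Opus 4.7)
The plan is to partition the domain $D$ into small polydisks indexed by residue classes modulo $\mathcal{P}_K^{c_\chi}$, classify each class according to the behavior of $\bar f$ at its image, and evaluate the integral on each type separately.

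First, I would fix the lifting $R\subset\mathcal{O}_K$ of $\mathbb{F}_q$ and write
$$D=\bigsqcup_{P}\bigl(P+\mathcal{P}_K^{c_\chi}\bigr)^n,$$
where $P$ ranges over representatives in $R^n$ whose images lie in $\bar D$ (for $\chi=\chi_{\rm triv}$ one may take $c_\chi=1$). Each $\bar P\in\bar D$ then falls into exactly one of three classes: (i) $\bar f(\bar P)\ne 0$; (ii) $\bar f(\bar P)=0$ and $\bar P$ is nonsingular on $V_{\bar f}(\mathbb{F}_q)$; (iii) $\bar P\in{\rm Sing}_{\bar f}(\mathbb{F}_q)$.

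For class (i), both $|f(x)|=1$ and $ac\,f(x)\equiv f(P)\pmod{\mathcal{P}_K^{c_\chi}}$ are constant on the polydisk $P+\mathcal{P}_K^{c_\chi}$, so each disk contributes $q^{-nc_\chi}\chi(ac\,f(P))$; summing matches $v(\bar f,D,\chi)$ exactly. The contributions from class (iii) collect, by definition, into $Z_f(s,\chi,D_{S(f,D)})$.

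The heart of the argument is class (ii). For a nonsingular $\bar P$, one may assume without loss of generality that $\partial\bar f/\partial x_1(\bar P)\ne 0$. The non-archimedean implicit function theorem (Hensel's lemma) then produces an analytic function $\varphi(x_2,\dots,x_n)$ and a unit-valued factor $u(x)$ on the polydisk around $P$ such that, under the volume-preserving change of variables $t=x_1-\varphi(x_2,\dots,x_n)$, one has $f(x)=t\cdot u(x)$. The integral over the polydisk then factors as
$$q^{-(n-1)}\int_{\mathcal{P}_K}\chi(ac(tu))\,|t|^s\,|dt|,$$
where the coordinates $x_2,\dots,x_n$ contribute the volume factor $q^{-(n-1)}$. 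For $\chi=\chi_{\rm triv}$ a direct stratification by valuation gives $\int_{\mathcal{P}_K}|t|^s|dt|=(1-q^{-1})q^{-1-s}/(1-q^{-1-s})$, and after summing over nonsingular residue classes and using $\sigma(\bar f,D,\chi_{\rm triv})=q^{-n}\cdot\#\{\text{nonsingular classes}\}$ one recovers the middle term $\sigma(\bar f,D,\chi)(1-q^{-1})q^{-s}/(1-q^{-1-s})$. For non-trivial $\chi$, the same stratification together with $\int_{\mathcal{O}_K^{\times}}\chi(u)|du|=0$ forces the integral to vanish, matching $\sigma(\bar f,D,\chi)=0$ in that case.

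The main obstacle is class (ii): one must produce $\varphi$ and $u$ with enough analytic control that the factorization $f=tu$ is valid on the full residue polydisk, and for $\chi$ of conductor $c_\chi>1$ one must refine the partition so that $ac\,u(x)$ is constant modulo $\mathcal{P}_K^{c_\chi}$, allowing the character to factor cleanly out of the integral. Once Hensel's lemma and this compatibility are in place, the remaining work reduces to the standard unit integral computed above.
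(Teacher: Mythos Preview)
The paper does not prove Lemma~2.4 at all: it is stated with citations to \cite{[Igu3]} and \cite{[YH]} and used as a black box throughout. So there is no ``paper's own proof'' to compare against. Your sketch is the standard argument for Igusa's stationary phase formula and is essentially correct in outline; the three-way partition of residue classes and the Hensel linearization at nonsingular points are exactly how the result is established in the cited references.

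One small point worth tightening in class~(ii): rather than writing $f=t\cdot u$ and then worrying about whether $ac\,u$ is constant on sub-polydisks, the cleaner route is to use $f$ itself as a new coordinate. Since $\partial\bar f/\partial x_1(\bar P)\ne 0$, the map $(x_1,\dots,x_n)\mapsto(f(x),x_2,\dots,x_n)$ is a measure-preserving analytic isomorphism from $D_P$ onto $\mathcal{P}_K\times(\bar P'+\mathcal{P}_K^{n-1})$, and the integral becomes literally $q^{-(n-1)}\int_{\mathcal{P}_K}\chi(ac\,t)|t|^s|dt|$ with no residual unit factor. This sidesteps the conductor-refinement issue you flagged as the main obstacle.
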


We will make frequent use of the following facts in the remaining part of the paper.

\begin{lem}\label{lem 2.5}
For any subset $D\subseteq \mathcal{O}_K^n$, each of the following is true.

{\rm (i).} Let $a$ be any nonnegative integer. Then for $\beta\in\pi^aD$,
we have $\pi^{-a}\beta\in D$.

{\rm (ii).} Let $f(x)\in\mathcal{O}_K[x]$.
Then for any $\alpha\in\mathcal{O}_K^*$, one has
\begin{equation*}
Z_{\alpha f}(s, \chi, D)=\chi\Big(\frac{\alpha}{\pi ^{{\rm ord}(\alpha)}}\Big)
q^{-{\rm ord}(\alpha)s}Z_f(s, \chi, D).
\end{equation*}
In particular, if $\alpha=\pi^e$ with $e\in\mathbb{N}$, then
$Z_{\pi^ef}(s, \chi, D)=q^{-es}Z_f(s, \chi, D)$.
\end{lem}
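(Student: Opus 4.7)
The plan is to prove the two parts in sequence; both are essentially formal consequences of the definitions, so the proof is short and no deep obstacle is expected.

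For part (i), I would simply unpack the meaning of $\pi^a D$. By definition this set is $\{\pi^a x \mid x\in D\}$, so if $\beta\in\pi^a D$ then $\beta=\pi^a x$ for some $x\in D$, and then $\pi^{-a}\beta=x\in D$. No further work is required.

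For part (ii), I would start from the integral definition
$$Z_{\alpha f}(s,\chi,D)=\int_D \chi\bigl(ac(\alpha f(x))\bigr)\,|\alpha f(x)|^s\,|dx|$$
and factor the integrand using the structure of $\alpha$. Since $\alpha\in\mathcal{O}_K^*$, writing $\alpha=ac(\alpha)\,\pi^{{\rm ord}(\alpha)}$ with $ac(\alpha)=\alpha/\pi^{{\rm ord}(\alpha)}\in\mathcal{O}_K^{\times}$ gives immediately $|\alpha|=q^{-{\rm ord}(\alpha)}$. For any $x\in D$ with $f(x)\neq 0$, the multiplicativity of $|\cdot|$ and of $ac$ on $K^{\times}$, combined with the multiplicativity of $\chi$ on $\mathcal{O}_K^{\times}$, yields
$$|\alpha f(x)|^s=q^{-{\rm ord}(\alpha)s}\,|f(x)|^s,\qquad \chi\bigl(ac(\alpha f(x))\bigr)=\chi(ac(\alpha))\,\chi(ac(f(x))).$$
The locus $\{x\in D\mid f(x)=0\}$ contributes nothing to either integral because $\chi(0)=0$ by convention and $|0|^s=0$ for ${\rm Re}(s)>0$, so the factorisation holds almost everywhere with respect to $|dx|$. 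Pulling the constants $\chi(\alpha/\pi^{{\rm ord}(\alpha)})$ and $q^{-{\rm ord}(\alpha)s}$ past the integral sign then delivers the claimed identity. The special case $\alpha=\pi^{e}$ follows instantly, since $ac(\pi^{e})=1$ and $\chi(1)=1$ force the character factor to disappear.

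The only thing to watch is the standard bookkeeping issue of the measure-zero set where $f(x)$ vanishes (or, for ramified $\chi$, where $ac(f(x))$ is irrelevant); as indicated above, the convention $\chi(0)=0$ together with the vanishing of $|f(x)|^s$ on this set makes the manipulation harmless, so there is no genuine obstacle in the argument.
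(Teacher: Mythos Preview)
Your proposal is correct and matches the paper's own proof essentially line for line: the paper dismisses part (i) as ``clear'' and for part (ii) factors $\chi(ac(\alpha f(x)))|\alpha f(x)|^s$ using the multiplicativity of $ac$ and $|\cdot|$, pulls the constants out of the integral, and then specialises to $\alpha=\pi^e$. Your extra remark about the measure-zero locus $\{f(x)=0\}$ is a harmless elaboration that the paper omits.
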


\begin{proof} Part (i) is clear true. In the following we show part (ii).
Since $ac$ is a multiplicative function,  we derive that
\begin{align}\label{2.10'}
Z_{\alpha f}(s, \chi, D)=&\int_D{\chi(ac(\alpha f(x)))|\alpha f(x)|^s|dx|}\nonumber\\
=&\chi(ac(\alpha))|\alpha|^s\int_D{\chi(ac f(x))|f(x)|^s|dx|}\nonumber\\
=&\chi\Big(\frac{\alpha}{\pi ^{{\rm ord}(\alpha)}}\Big)q^{-{\rm ord}(\alpha)s}Z_f(s, \chi, D)
\end{align}
as expected.

Moreover, let $\alpha=\pi^e$ with $e\in\mathbb{N}$.
Since $\chi\big(\frac{\alpha}{\pi ^{{\rm ord}(\alpha)}}\big)=\chi(1)=1$ and ord$(\pi^{e})=e$,
(\ref{2.10'}) implies that $Z_{\pi^ef}(s, \chi, D)=q^{-es}Z_f(s, \chi, D)$.
Hence part (ii) is proved.

This concludes the proof of Lemma \ref{lem 2.5}.
\end{proof}

\begin{lem}\label{lem 2.6}
Let $b, c\in\mathcal{O}_K^*$ (recall that $\mathcal{O}_K^*=\mathcal{O}_K\setminus\{0\}$)
and $b_i\in\mathcal{O}_K$ for all integers
$i$ with $1\le i\le n$. Let $l(x, y)=bx^{d_0}\prod_{i=1}^n(x-b_i)^{d_i}+cy^m$.
Let $b=\pi^{e_b}b_0$ with $b_0\in\mathcal{O}_K^{\times}$. Then
\begin{equation}\label{eqno 2.9}
Z_l(s, \chi)=\dfrac{L_1(q^{-s})}{1-q^{-1-s}}+L_2(q^{-s})Z_v(s, \chi),
\end{equation}
where $v(x, y)=b_0x^{d_0}\prod_{i=1}^n(x-b_i)^{d_i}+wy^m$
with $w\in\mathcal{O}_K^*$ and $L_1(x), L_2[x]\in\mathbb{C}[x]$.
Furthermore, if $b_i\in\mathcal{P}_K\setminus\{0\}$
for all integers $i$ with $1\le i\le n$, then
\begin{equation}\label{eqno 2.10}
Z_v(s, \chi)=\dfrac{V_1(q^{-s})}{1-q^{-1-s}}+V_2(q^{-s})Z_{\tilde v}(s, \chi),
\end{equation}
where $\tilde v(x, y)=b_0x^{d_0}\prod_{i=1}^n(x-\tilde{b}_i)^{d_i}+\tilde wy^m$
satisfies that $\tilde w\in\mathcal{O}_K^*$ and
$\tilde{b}_i\in\mathcal{O}_K^{\times}$ for at least
one index $i$ with $1\le i\le n$, and $V_1(x), V_2[x]\in\mathbb{C}[x]$.
\end{lem}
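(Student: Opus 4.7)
My plan is to prove (\ref{eqno 2.9}) first by induction on $e_b$, and then to bootstrap (\ref{eqno 2.9}) into an inductive proof of (\ref{eqno 2.10}) on the parameter $\bar e:=\min_{1\le i\le n}{\rm ord}(b_i)\ge 1$. For (\ref{eqno 2.9}), the base case $e_b=0$ is trivial since then $l=v$ with $w=c$. For $e_b\ge 1$, write $c=\pi^{e_c}c_0$ with $c_0\in\mathcal{O}_K^{\times}$; Lemma \ref{lem 2.5}(ii) lets me factor $\pi^{\min(e_b,e_c)}$ out of $l$, which either directly exhibits the desired $v$-form (if $e_c\ge e_b$) or reduces to the sub-case $e_c=0$. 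In the sub-case $e_c=0$, I would apply the stationary phase formula (Lemma \ref{lem 2.4}) to $l$: since $\bar l(x,y)=\bar c_0 y^m$ with $m\ge 2$, the singular locus of $\bar l$ is $\{y=0\}$, no non-singular points of $V_{\bar l}$ exist (so $\sigma(\bar l,\chi)=0$), and the SPF collapses to $Z_l(s,\chi)=v(\bar l,\chi)+Z_l(s,\chi,\mathcal{O}_K\times\mathcal{P}_K)$. Substituting $y=\pi y'$ turns the singular piece into $q^{-1}Z_{l(x,\pi y')}(s,\chi)$ with $l(x,\pi y')=\pi^{e_b}b_0p(x)+c_0\pi^m y'^m$; factoring $\pi^{\min(e_b,m)}$ via Lemma \ref{lem 2.5}(ii) either yields a polynomial of form $v$ (when $e_b\le m$, closing the step) or a polynomial of form $l$ with strictly smaller $e_b$ (when $e_b>m$, closing the induction), while the constant $v(\bar l,\chi)$ absorbs into $L_1(q^{-s})/(1-q^{-1-s})$ after multiplying the numerator by $1-q^{-1-s}$.

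For (\ref{eqno 2.10}), the hypothesis $b_i\in\mathcal{P}_K\setminus\{0\}$ forces $\bar v(x,y)=\bar b_0 x^d+\bar w y^m$ with $d:=d_0+\sum_i d_i$. A direct analysis pins the singular locus of $\bar v$ as either $\{(0,0)\}$ (when $\bar w\ne 0$), or $\{x=0\}$ (when $\bar w=0$ and $d\ge 2$), or empty (otherwise). In the empty case, SPF immediately gives $Z_v=V_1(q^{-s})/(1-q^{-1-s})$, so take $V_2=0$ and $\tilde v$ arbitrary. In the other cases, the substitution $(x,y)\mapsto(\pi x',\pi y')$ or $x\mapsto\pi x'$ on the singular region produces a polynomial of the shape
$$\pi^d b_0 x'^{d_0}\prod_{i=1}^n(x'-\tilde b_i)^{d_i}+\pi^{\alpha}w y'^m\quad\text{with}\ \alpha\in\{0,m\},\ \tilde b_i:=\pi^{e_i-1}b_i^*,$$
where $b_i=\pi^{e_i}b_i^*$. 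Factoring $\pi^{\min(d,\alpha)}$ puts this into form $v$ or form $l$; in the $l$-case I apply (\ref{eqno 2.9}) to revert to form $v$. If $\bar e=1$ then at least one $\tilde b_i$ is a unit, so the resulting polynomial is the required $\tilde v$; if $\bar e\ge 2$ then all $\tilde b_i$ remain in $\mathcal{P}_K$ with new minimum ordinal $\bar e-1$, and the inductive hypothesis applies.

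The main obstacle will be the case analysis underpinning (\ref{eqno 2.10}): the form of the singular locus of $\bar v$ (and hence the substitution to perform) depends on whether $\bar w$ vanishes modulo $\pi$, and the auxiliary application of (\ref{eqno 2.9}) needed to turn the intermediate $l$-form back into $v$-form must be interleaved cleanly within the $\bar e$-induction. Careful bookkeeping to ensure that the $L_i(q^{-s})$ and $V_i(q^{-s})$ produced at every step remain polynomials in $q^{-s}$, and that the relevant induction parameter strictly decreases, is essential but essentially routine once the substitutions above are in place.
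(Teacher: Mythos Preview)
Your approach for (\ref{eqno 2.9}) is essentially identical to the paper's: both factor out the minimal power of $\pi$, apply SPF once the $y$-coefficient is a unit, substitute $y\mapsto\pi y$, and iterate; you phrase this as induction on $e_b$ while the paper performs the same finite iteration directly by writing $e_b-e_c=tm+r$.

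For (\ref{eqno 2.10}) your route diverges slightly. The paper avoids any case analysis on $\bar w$ by first splitting $\mathcal{O}_K^2=(\mathcal{O}_K^{\times}\times\mathcal{O}_K)\cup(\pi\mathcal{O}_K\times\mathcal{O}_K)$: on the first piece the singular locus of $\bar v$ intersected with $\mathbb{F}_q^*\times\mathbb{F}_q$ is always empty (since $x_0\ne 0$ forces either $\bar v(P)\ne 0$ or $\partial_y\bar v(P)\ne 0$), so SPF terminates there; on the second piece one substitutes only $x\mapsto\pi x$ and then invokes (\ref{eqno 2.9}). You instead apply SPF over all of $\mathcal{O}_K^2$, which forces a split according to whether $\bar w\ne 0$ (singular locus $\{(0,0)\}$, substitute in both variables) or $\bar w=0$ (singular locus $\{x=0\}$, substitute only in $x$). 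Both strategies work and yield the same reduction $b_i\mapsto\pi^{-1}b_i$; the paper's is simply more uniform. One small correction to your case analysis: when $\bar w\ne 0$ and $d\le 1$ the singular locus of $\bar b_0 x^d+\bar w y^m$ is empty (for $d=0$ one has $\bar v(P)=\bar b_0\ne 0$, and for $d=1$ one has $\partial_x\bar v=\bar b_0\ne 0$), not $\{(0,0)\}$; fold this into your ``empty'' case so that SPF is applied with the correct $S(v)$.
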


\begin{proof}
Let $c=\pi^{e_c}c_1$ with $c_1\in\mathcal{O}_K^{\times}$ and $e_c\ge 0$
being an integer. We prove (\ref{eqno 2.9}) by considering the following two cases.

{\sc Case 1.} $e_b\le e_c$. Then by Lemma \ref{lem 2.5} (ii), one has
$Z_l(s, \chi)=q^{-e_bs}Z_{l_1}(s, \chi)$
with $l_1(x, y)=b_0x^{d_0}\prod_{i=1}^n(x-b_i)^{d_i}+\pi^{e_c-e_b}c_1y^m$.
So (\ref{eqno 2.9}) is true in this case.

{\sc Case 2.} $e_b>e_c$. Then Lemma \ref{lem 2.5} (ii) gives us that
\begin{equation}\label{eqno 2.11}
Z_l(s, \chi)=q^{-e_cs}Z_{\ell}(s, \chi),
\end{equation}
where $\ell (x, y)=b_0\pi^{e_b-e_c}x^{d_0}\prod_{i=1}^n(x-b_i)^{d_i}+c_1y^m$.
Since $e_b-e_c>0$ and $c_1\in\mathcal{O}_K^{\times}$, one has
$\bar{\ell}(x, y)=\bar{c}_1y^m$
with $\bar{c}_1\ne\bar{0}$. Then it is easy to see that
$${\rm Sing}_{\bar{\ell}}(\mathbb{F}_q)\bigcap \mathbb{F}_q^2
=\{(x, y)\in \mathbb{F}_q^2|y=\bar{0} \}.$$
Thus $S(\ell)=\{(x, y)\in R^2|y=0\}$ and $D_{S(\ell)}=\mathcal{O}_K\times\pi\mathcal{O}_K$.
By Lemma \ref{lem 2.4}, we deduce that
\begin{align}\label{2.12}
Z_{\ell}(s, \chi)=&v(\bar{\ell}, \chi)+\sigma(\bar{\ell}, \chi)
\dfrac{(1-q^{-1})q^{-s}}{1-q^{-1-s}}+Z_{\ell}(s, \chi, D_{S(\ell)})\nonumber\\
=&\dfrac{L_{2,1}(q^{-s})}{1-q^{-1-s}}+Z_{\ell}(s, \chi, D_{S(\ell)})
\end{align}
where $L_{2,1}(x)\in\mathbb{C}[x]$ since $v(\bar{\ell}, \chi)$ and $\sigma(\bar{\ell}, \chi)$
are constants defined in Lemma \ref{lem 2.4}.

For $Z_{\ell}(s, \chi, D_{S(\ell)})$, we make the change of variables
of the form: $(x, y)\mapsto (x_1, \pi y_1)$.
Then by Lemma \ref{lem 2.5} (i), one has
\begin{align}\label{eqno 2.13}
Z_{\ell}(s, \chi, D_{S(\ell})=q^{-1}Z_{\ell_1}(s, \chi),
\end{align}
where $\ell_1(x, y):=b_0\pi^{e_b-e_c}x^{d_0}\prod_{i=1}^n(x-b_i)^{d_i}+c_1\pi^my^m$.
By (\ref{eqno 2.11}) to (\ref{eqno 2.13}), we obtain that
\begin{equation}\label{eqno 2.14}
Z_l(s, \chi)=\dfrac{q^{-e_cs}L_{2, 1}(q^{-s})}{1-q^{-1-s}}+q^{-1-e_cs}Z_{\ell_1}(s, \chi).
\end{equation}

{\sc Subcase 2.1.} $e_b-e_c\le m$. Then Lemma \ref{lem 2.5} (ii) yields that
\begin{equation}\label{eqno 2.15}
Z_{\ell_1}(s, \chi)=q^{-(e_b-e_c)s}Z_{\ell_2}(s, \chi),
\end{equation}
where $\ell_2(x, y)=b_0x^{d_0}\prod_{i=1}^n(x-b_i)^{d_i}+c_1\pi^{m-(e_b-e_c)}y^m$.
By (\ref{eqno 2.14}) and (\ref{eqno 2.15}), we have
$$Z_l(s, \chi)=\dfrac{q^{-e_cs}L_{2, 1}(q^{-s})}{1-q^{-1-s}}+q^{-1-e_bs}Z_{\ell_2}(s, \chi)$$
as (\ref{eqno 2.9}) expected.

{\sc Subcase 2.2.} $e_b-e_c>m$. Let $e_b-e_c=tm+r$ with $0\le r<m$.
By applying Lemma \ref{lem 2.4} for $t$ times to $\ell_1$,
the above argument together with (\ref{eqno 2.14}) gives us that
\begin{equation}\label{eqno 2.16}
Z_l(s, \chi)=\dfrac{L_{2, 2}(q^{-s})}{1-q^{-1-s}}+q^{-(t+1)-(tm+e_c)s}Z_{\ell_3}(s, \chi),
\end{equation}
where $\ell_3(x, y)=b_0\pi^rx^{d_0}\prod_{i=1}^n(x-b_i)^{d_i}+c_1\pi^my^m$
and $L_{2,2}(x)\in\mathbb{C}[x]$. But $r<m$.
Thus by Lemma \ref{lem 2.5} (ii), we have
\begin{equation}\label{eqno 2.17}
Z_{\ell_3}(s, \chi)=q^{-rs}Z_{\ell_4}(s, \chi),
\end{equation}
where $\ell_4(x, y)=b_0x^{d_0}\prod_{i=1}^n(x-b_i)^{d_i}+c_1\pi^{m-r}y^m$.
Putting (\ref{eqno 2.17}) into (\ref{eqno 2.16}), we arrive at
\begin{align*}
Z_l(s, \chi)=&\dfrac{L_{2, 2}(q^{-s})}{1-q^{-1-s}}
+q^{-(t+1)-(tm+r+e_c)s}Z_{\ell_4}(s, \chi)\\
=&\dfrac{L_{2, 2}(q^{-s})}{1-q^{-1-s}}
+q^{-(t+1)-e_bs}Z_{\ell_4}(s, \chi).
\end{align*}
Thus (\ref{eqno 2.9}) holds for Case 2. This completes the proof of (\ref{eqno 2.9}).

In the remaining part of the proof, we show (\ref{eqno 2.10}).
For any integer $i$ with $1\le i\le n$, let $b_i=\pi^{e_i}b_{i, 1}$ with
$b_{i, 1}\in\mathcal{O}_K^{\times}$. Then $b_i\in\mathcal{P}_K$ implies that $e_i\ge 1$.
Let $\sum_{i=0}^nd_i=d$. Then $\bar v(x, y)=\bar{b}_0x^d+\bar{w}y^m$ with
$\bar{b}_0\ne\bar 0$ since $b_0\in\mathcal{O}_K^{\times}$.
On the other hand, since $\mathcal{O}_K=\mathcal{O}_K^{\times}\cup\pi\mathcal{O}_K$,
we have
\begin{equation}\label{eqno 2.18}
Z_v(s, \chi)=Z_v(s, \chi, D_1)+Z_v(s, \chi, D_2),
\end{equation}
where $D_1=\mathcal{O}_K^{\times}\times\mathcal{O}_K$ and
$D_2=\pi\mathcal{O}_K\times\mathcal{O}_K$.

For $Z_v(s, \chi, D_1)$, one has $\bar{D}_1=\mathbb{F}_q^*\times\mathbb{F}_q$.
If $P=(x_0, y_0)\in {\rm Sing}_{\bar v}(\mathbb{F}_q)$, then
$$\bar v(P)=\frac{\partial \bar v}{\partial x}(P)
=\frac{\partial \bar v}{\partial y}(P)=\bar{0}.$$
Since $m\ge 2$ and $p\nmid m$,
$\frac{\partial \bar v}{\partial y}(P)=m\bar{w}y_0^{m-1}=\bar{0}$
gives us that $\bar wy_0=\bar{0}$. But $\bar{v}(P)=\bar{0}$ yields that $x_0=0$.
It then follows that ${\rm Sing}_{\bar{v}}(\mathbb{F}_q)\bigcap \bar{D}_1=\emptyset$,
which implies that $S(v, D_1)=\emptyset$. Using Lemma \ref{lem 2.4}, we have
\begin{equation}\label{eqno 2.19}
Z_v(s, \chi, D_1)=\dfrac{V_{1, 1}(q^{-s})}{1-q^{-1-s}}
\end{equation}
where $V_{1, 1}(x)\in\mathbb{C}[x]$.

For $Z_v(s, \chi, D_2)$, we make the change of variables of the form:
$(x, y)\mapsto (\pi x_1, y_1)$, then Lemma \ref{lem 2.5} (i) tells us that
\begin{equation*}
Z_v(s, \chi, D_2)=q^{-1}Z_{v_1}(s, \chi),
\end{equation*}
where
$$v_1(x, y)=b_0\pi^{d_0}x^{d_0}\prod_{i=1}^n(\pi x-b_i)^{d_i}+wy^m
:=b_0\pi^dx^{d_0}\prod_{i=1}^n(x-b_{i, 2})^{d_i}+wy^m$$
with $b_{i, 2}=\pi^{-1}b_i=\pi^{e_i-1}b_{i, 1}\in\mathcal{O}_K$
since $e_i\ge 1$. By (\ref{eqno 2.9}) applied to $v_1$ gives us that
\begin{equation}\label{eqno 2.20}
Z_v(s, \chi, D_2)=q^{-1}Z_{v_1}(s, \chi)
=\dfrac{V_{2, 1}(q^{-s})}{1-q^{-1-s}}+V_{2, 2}(q^{-s})Z_{v_2}(s, \chi),
\end{equation}
where $v_2(x, y):=b_0x^{d_0}\prod_{i=1}^n(x-b_{i, 2})^{d_i}+w_1y^m$ with
$w_1\in \mathcal{O}_K^*$ and $V_{2, 1}(x), V_{2, 2}(x)\in\mathbb{C}[x]$.
From (\ref{eqno 2.18}) to (\ref{eqno 2.20}), we obtain that
\begin{equation}\label{eqno 2.21}
Z_v(s, \chi)=\dfrac{V_1(q^{-s})}{1-q^{-1-s}}+V_2(q^{-s})Z_{v_2}(s, \chi),
\end{equation}
where $V_1(x), V_2(x)\in\mathbb{C}[x]$.

If $\min\{e_1,\cdots, e_n\}=1$, there exists one integer $i_0$ with $1\le i_0\le n$
such that $e_{i_0}=1$, which implies that $b_{i_0, 2}\in\mathcal{O}_K^{\times}$
since ${\rm ord}(b_{i_0, 2})={\rm ord}(\pi^{e_{i_0-1}}b_{i_0, 1})=e_{i_0}-1=0$.
Hence (\ref{eqno 2.10}) is true in this case.

If $\min\{e_1,\cdots, e_n\}>1$, applying the above argument to $v_2$
for $\min\{e_1,\cdots, e_n\}-1$ times, then (\ref{eqno 2.21}) yields that
\begin{equation*}
Z_v(s, \chi)=\dfrac{V_3(q^{-s})}{1-q^{-1-s}}+V_4(q^{-s})Z_{v_3}(s, \chi),
\end{equation*}
where $V_3(x), V_4(x)\in\mathbb{C}[x]$ and
$$v_3(x, y):=b_0x^{d_0}\prod_{i=1}^n(x-b_{i, 3})^{d_i}+w_2y^m$$
with $b_{i, 3}=\pi^{e_i-\min\{e_1,\cdots, e_n\}}b_{i, 1}$ and $w_2\in\mathcal{O}_K^*$.
Moreover, there exists one integer $j_0$ with $1\le j_0\le n$ such that
$e_{j_0}=\min\{e_1,\cdots, e_n\}$, i.e. $b_{j_0, 3}\in\mathcal{O}_K^{\times}$.
Thus (\ref{eqno 2.10}) holds in this case. So (\ref{eqno 2.10}) is proved.
This finishes the proof of Lemma \ref{lem 2.6}.
\end{proof}

\begin{lem}\label{lem 2.7}
Let $u(x)=u_0\prod_{i=1}^n(x-a_i)\in\mathcal{O}_K[x]$
with $u_0\ne 0$ and $a_i\in K$ for all $i$. Let
$T:=\{1\le i\le n | {\rm ord}(a_i)\ge 0\}$. Then
${\rm ord}(u_0)+\sum_{i\notin T}{\rm ord}(a_i)\ge 0$.
\end{lem}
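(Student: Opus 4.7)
The plan is to isolate a single coefficient of $u(x)$ whose valuation is exactly $\mathrm{ord}(u_0)+\sum_{i\notin T}\mathrm{ord}(a_i)$, and then read off the desired inequality from the hypothesis $u\in\mathcal{O}_K[x]$. Set $k:=|T|$, so that $n-k=|T^c|$ is the number of roots of $u$ with strictly negative valuation. Expanding
\begin{equation*}
u(x)=u_0\sum_{j=0}^n(-1)^{n-j}\,e_{n-j}(a_1,\dots,a_n)\,x^j,
\end{equation*}
the natural candidate is the coefficient of $x^k$, namely $(-1)^{n-k}u_0\cdot e_{n-k}(a_1,\dots,a_n)$, where $e_{n-k}$ denotes the elementary symmetric polynomial of degree $n-k$.

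The key step is to compute $\mathrm{ord}\bigl(e_{n-k}(a_1,\dots,a_n)\bigr)$ on the nose. Writing this symmetric polynomial as a sum of products $\prod_{i\in S}a_i$ over subsets $S\subseteq\{1,\dots,n\}$ with $|S|=n-k$, each such product has valuation $\sum_{i\in S}\mathrm{ord}(a_i)$. Since $\mathrm{ord}(a_i)<0$ for every $i\in T^c$ while $\mathrm{ord}(a_j)\ge 0$ for every $j\in T$ (with the convention $\mathrm{ord}(0)=+\infty$, which harmlessly puts any zero roots into $T$), swapping any element of $T^c$ out of $S$ for an element of $T$ strictly increases the valuation sum. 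Because $|T^c|=n-k$ matches the required cardinality, this forces $S=T^c$ to be the \emph{unique} minimizer, with minimum value $\sum_{i\notin T}\mathrm{ord}(a_i)$.

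Uniqueness of the minimizer is exactly what promotes the usual ultrametric bound $\mathrm{ord}(e_{n-k})\ge\sum_{i\notin T}\mathrm{ord}(a_i)$ to an \emph{equality}. Combined with the fact that the coefficient of $x^k$ in $u$ has non-negative valuation, this gives
\begin{equation*}
0\le\mathrm{ord}(u_0)+\mathrm{ord}\bigl(e_{n-k}(a_1,\dots,a_n)\bigr)=\mathrm{ord}(u_0)+\sum_{i\notin T}\mathrm{ord}(a_i),
\end{equation*}
which is the claim. The boundary situations $T=\varnothing$ (look at the constant term $u(0)=(-1)^nu_0\prod_i a_i$) and $T=\{1,\dots,n\}$ (look at the leading coefficient $u_0$, with empty sum zero) fit seamlessly into the same formula.

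The only subtle point, and the place where the argument could go wrong, is the uniqueness of the minimizing subset: without the strict separation $\mathrm{ord}(a_i)<0\le\mathrm{ord}(a_j)$ for $i\notin T$ and $j\in T$, the ultrametric would give only a lower bound on $\mathrm{ord}(e_{n-k})$, which would be insufficient to deduce the equality we need. Everything else reduces to the multiplicativity and non-archimedean behaviour of $\mathrm{ord}$.
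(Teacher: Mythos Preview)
Your proof is correct and follows essentially the same route as the paper: both isolate the coefficient of $x^{|T|}$ in $u(x)$, identify the unique minimum-valuation term in the corresponding elementary symmetric sum as $\prod_{i\notin T}a_i$, upgrade the ultrametric inequality to an equality (what the paper calls the isosceles triangle principle), and then read off the bound from $u\in\mathcal{O}_K[x]$. The only differences are cosmetic---your $k$ denotes $|T|$ whereas the paper's $k$ denotes $|T^c|$, and you treat the boundary cases uniformly rather than separating off $T^c=\varnothing$.
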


\begin{proof}
Let $\tilde T:=\{1,\cdots, n\}\setminus T$, i.e. $\tilde T=\{1\le i\le n | {\rm ord}(a_i)<0\}$.
If $\tilde T=\emptyset$, then Lemma 2.5 is trivial since $u_0\in\mathcal{O}_K$,
which implies that ${\rm ord}(u_0)\ge 0$. If $\tilde T\ne\emptyset$,
then one lets $|\tilde T|=k$. Thus $1\le k\le n$. Without loss of any generality,
one may let $\tilde T=\{1,\cdots, k\}$. Write $u(x)=\sum_{j=0}^nu_jx^{n-j}$.
We can derive that
\begin{align}\label{2.6}
u_k=&(-1)^ku_0\sum_{1\le i_1<\cdots<i_k\le n}a_{i_i}\cdots a_{i_k}\nonumber\\
=&(-1)^ku_0\big(a_1\cdots a_k+\sum_{1\le i_1<\cdots<i_k\le n, \atop i_k>k}
a_{i_i}\cdots a_{i_k}\big)\nonumber\\
:=&(-1)^ku_0A.
\end{align}

Since for any integer $i_k$ with $i_k>k$, i.e. $i_k\in T$,
one has ${\rm ord}(\alpha_{i_k})\ge 0$. It then follows that
\begin{equation}\label{eqno 2.7}
{\rm ord}(a_{i_1}\cdots a_{i_k})=\sum_{j=1}^k{\rm ord}(a_{i_j})
>\sum_{j=1}^k{\rm ord}(a_j)={\rm ord}(a_1\cdots a_k).
\end{equation}
Since $K$ is non-archimedean, (\ref{eqno 2.7}) implies that
$${\rm ord}\big(\sum_{1\le i_1<\cdots<i_k\le n, \atop i_k>k}
a_{i_i}\cdots a_{i_k}\big)\ge\min_{1\le i_1<\cdots<i_k\le n, \atop i_k>k}
\{{\rm ord}(a_{i_i}\cdots a_{i_k})\}>{\rm ord}(a_1\cdots a_k).$$
Hence by the isosceles triangle principle (see, for instance, \cite{[K]}), we have
\begin{equation}\label{eqno 2.8}
{\rm ord}(A)={\rm ord}(a_1\cdots a_k)=\sum_{i=1}^k{\rm ord}(a_i).
\end{equation}
Thus by (\ref{2.6}) and (\ref{eqno 2.8}), one gets that
\begin{align*}
{\rm ord}(u_k)={\rm ord}((-1)^ku_0A)=&{\rm ord}((-1)^k)+{\rm ord}(u_0)+{\rm ord}(A)\\
=&{\rm ord}(u_0)+\sum_{i=1}^k{\rm ord}(a_i).
\end{align*}
But $u_k\in\mathcal{O}_K$ tells us that  ${\rm ord}(u_k)\ge 0$. So the
desired result follows immediately. Thus Lemma \ref{lem 2.7} is proved.
\end{proof}

\begin{lem}\label{lem 2.8}
Let $\mathbb{F}_q$ be the finite field of characteristic $p$.
Let $\imath(x, y)=a+by^m\in\mathbb{F}_q[x, y]$ be any polynomial satisfying that
$a\in\mathbb{F}_q^*$ and $m$ is an integer with $m\ge 2$ and $p\nmid m$.
Then ${\rm Sing}_{\imath}(\mathbb{F}_q)=\emptyset$.
\end{lem}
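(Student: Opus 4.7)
The plan is to directly unpack the definition of ${\rm Sing}_\imath(\mathbb{F}_q)$ and derive a contradiction from the assumption that a singular point exists. Suppose $P=(x_0,y_0)\in\mathbb{F}_q^2$ lies in ${\rm Sing}_\imath(\mathbb{F}_q)$. Since $\imath$ does not depend on $x$, the partial $\partial \imath/\partial x$ vanishes identically, so the only nontrivial conditions are
\[
\imath(P)=a+by_0^m=0 \quad\text{and}\quad \frac{\partial \imath}{\partial y}(P)=mby_0^{m-1}=0.
\]

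I would then split into two cases according to whether $b=0$ in $\mathbb{F}_q$. If $b=0$, then $\imath(P)=a$, which is nonzero by hypothesis, contradicting $\imath(P)=0$. If $b\neq 0$ in $\mathbb{F}_q$, then the assumption $p\nmid m$ guarantees that the image of $m$ in $\mathbb{F}_q$ is nonzero, so $mby_0^{m-1}=0$ forces $y_0^{m-1}=0$, i.e.\ $y_0=0$. Substituting back gives $\imath(P)=a\neq 0$, again a contradiction.

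Either case rules out the existence of a singular point, so ${\rm Sing}_\imath(\mathbb{F}_q)=\emptyset$. There is no real obstacle here; the only subtlety worth flagging is the use of $p\nmid m$ to ensure $m$ is invertible in $\mathbb{F}_q$, which is exactly where the hypothesis on the characteristic is consumed. The condition $m\ge 2$ is implicitly needed only so that $y^m$ actually contributes to the polynomial in a nontrivial way (ensuring $\partial\imath/\partial y$ has the form $mby^{m-1}$ rather than being identically the constant $b$).
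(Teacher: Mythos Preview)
Your proof is correct and follows essentially the same approach as the paper: both split into the cases $b=0$ and $b\neq 0$, use $p\nmid m$ to conclude $y_0=0$ from the vanishing of $\partial\imath/\partial y$, and then observe $\imath(P)=a\neq 0$. The only cosmetic difference is that you phrase it as a contradiction while the paper checks directly that an arbitrary point is nonsingular.
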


\begin{proof}
Let $P=(x_0, y_0)$ be any element in $\mathbb{F}_q^2$.

If $b=0$, then
$\imath(P)=a\ne 0$ since $a\in\mathbb{F}_q^*$,
which infers that $P\notin{\rm Sing}_{\imath}(\mathbb{F}_q)$.

If $b\ne 0$, then $\frac{\partial \imath}{\partial y}(P)=mby_0^{m-1}=0$
only if $y_0=0$ since $p\nmid m$. But $y_0=0$ implies that
$\imath(P)=a\ne 0$. This tells us that $P\notin{\rm Sing}_{\imath}(\mathbb{F}_q)$.

In conclusion, we have ${\rm Sing}_{\imath}(\mathbb{F}_q)=\emptyset$
as one expects. So Lemma \ref{lem 2.8} is proved.
\end{proof}

Now we state a useful definition introduced by Z${\rm\acute{u}}$${\rm\tilde{n}}$iga-Galindo.

\begin{defn}
Let $f(x)\in\mathcal{O}_K[x]$ and $P\in \mathcal{O}_K^n$ such that
$P\notin {\rm Sing}_f(\mathcal{O}_K)$. We define the {\it index} $L(f, P)$ by
$$L(f, P):=\min\Big\{{\rm ord}\big(f(P)\big), {\rm ord}\big(\dfrac{\partial f}{\partial x_1}(P)\big),
\cdots, {\rm ord}\big(\dfrac{\partial f}{\partial x_n}(P)\big)\Big\}.$$
\end{defn}
In \cite{[ZG2]}, Z${\rm\acute{u}}$${\rm\tilde{n}}$iga-Galindo
proved that for any $f(x)\in\mathcal{O}_K[x]$ such that
${\rm Sing}_f(\mathcal{O}_K)\cap(\mathcal{O}_K^{\times})^n=\emptyset$,
$L(f, P)$ is bounded by a constant depended only on $f$ for all $P\in(\mathcal{O}_K^{\times})^n$.
Let $C(f, (\mathcal{O}_K^{\times})^n)$ be the minimal constant with
$L(f, P)\le C(f, (\mathcal{O}_K^{\times})^n)$ and $C(f, (\mathcal{O}_K^{\times})^n)\in\mathbb{N}$.
The following result is due to Z${\rm\acute{u}}$${\rm\tilde{n}}$iga-Galindo
and is in fact a special case of Corollary 2.5 of \cite{[ZG2]}.

\begin{lem}\label{lem 2.10} \cite{[ZG2]}
Let $F(x)=f(x)+\pi^\beta g(x)\in\mathcal{O}_K[x]$ such that
$\beta\ge C(f, (\mathcal{O}_K^{\times})^n)+1$ and
$${\rm Sing}_F(\mathcal{O}_K)\bigcap(\mathcal{O}_K^{\times})^n
={\rm Sing}_f(\mathcal{O}_K)\bigcap(\mathcal{O}_K^{\times})^n=\emptyset.$$
Then $Z_F(s, \chi, (\mathcal{O}_K^{\times})^n)=Z_f(s, \chi, (\mathcal{O}_K^{\times})^n).$
\end{lem}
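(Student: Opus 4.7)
The plan is to exploit the uniform bound $C := C(f, (\mathcal{O}_K^{\times})^n)$ on the index $L(f, P)$ by partitioning
\[
(\mathcal{O}_K^{\times})^n = \bigsqcup_{P} \bigl( P + (\mathcal{P}_K^{\beta})^n \bigr),
\]
where $P$ runs over a fixed set of coset representatives modulo $\mathcal{P}_K^{\beta}$, and then proving that on each polydisk the integrands of the zeta functions of $f$ and $F$ produce the same contribution. The hypothesis $\beta \ge C+1$ is precisely what guarantees that the perturbation $\pi^{\beta} g(x)$ sits below the precision required by every relevant quantity attached to $f$, so the desired global equality reduces to a pointwise or ``up to Jacobian'' comparison on each polydisk.

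For a fixed $P$, I would split into two cases according to which term attains the minimum in $L(f, P)$. If $L(f, P) = {\rm ord}(f(P))$, then this valuation is at most $C < \beta$, and a Taylor expansion yields $f(x) \equiv f(P) \pmod{\mathcal{P}_K^{\beta}}$ for all $x$ in the polydisk; since $\pi^{\beta} g(x) \in \mathcal{P}_K^{\beta}$ as well, the orders and angular components of $f(x)$ and $F(x)$ agree modulo $\mathcal{P}_K^{c_{\chi}}$ (after enlarging $C$ at the outset so that $\beta \ge c_{\chi}$), and the two integrands coincide pointwise on the polydisk. The nonsingularity hypothesis rules out the degenerate subcase where both $f(P)$ and all partials vanish.

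The more delicate case is when the minimum in $L(f, P)$ is attained by some partial derivative $(\partial f/\partial x_i)(P)$, of valuation $c \le C$. Here I would perform a Hensel-type change of coordinates $x_i \mapsto y_i := \pi^{-c} f(x)$ on the polydisk, which by the $p$-adic implicit function theorem is an analytic, measure-preserving bijection onto an explicit subset of $\mathcal{O}_K$. Under this substitution the integral of $\chi(ac f)|f|^s$ collapses to an elementary one-variable integral in $y_i$. Applying the analogous substitution $x_i \mapsto \pi^{-c} F(x)$ to the $F$-integral produces the same expression, because the two coordinate changes differ by an analytic automorphism of the polydisk whose Jacobian is congruent to $1$ modulo $\mathcal{P}_K^{\beta - c}$. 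The main obstacle is making this Hensel step rigorous: one must verify both that $\pi^{-c} F$ inherits the implicit-function property from $\pi^{-c} f$ (where the hypothesis ${\rm Sing}_F(\mathcal{O}_K) \cap (\mathcal{O}_K^{\times})^n = \emptyset$ is essential, since it prevents any new singular point from being created by the perturbation) and that the Jacobian factors cancel exactly rather than only up to an error term, which is precisely the content of Corollary 2.5 of \cite{[ZG2]}.
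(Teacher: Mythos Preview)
The paper does not prove this lemma: it is stated as a special case of Corollary~2.5 of \cite{[ZG2]} and simply cited. So there is no ``paper's own proof'' to compare against beyond the reference itself.

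Your sketch is essentially the argument that Z\'u\~niga-Galindo gives in \cite{[ZG2]}: cover $(\mathcal{O}_K^{\times})^n$ by polydisks of level $\beta$, and on each one use either that $|f|$ is already constant (when ${\rm ord}(f(P))$ realizes the minimum) or a Hensel/implicit-function change of coordinates in the direction of a small partial derivative. One point to tighten: your parenthetical fix ``after enlarging $C$ at the outset so that $\beta\ge c_{\chi}$'' is not available, since $C(f,(\mathcal{O}_K^{\times})^n)$ is a fixed invariant of $f$ and the hypothesis only gives $\beta\ge C+1$. In \cite{[ZG2]} this is handled not by a pointwise comparison of angular components but by carrying out the analytic change of variables in \emph{both} cases (the case ${\rm ord}(f(P))\le C$ is absorbed into the Hensel step, since having a small partial is the generic situation and the remaining points contribute via an explicit integral that does not see the perturbation). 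With that adjustment your outline matches the source.
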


From Lemma \ref{lem 2.10}, we can derive the following result.

\begin{lem}\label{lem 2.11}
Let $r(x, y)=r_1(x, y)+\pi r_0(x)\in\mathcal{O}_K[x, y]$
and $L(r_1, P)=0$ for all $P\in(\mathcal{O}_K^{\times})^2$.
Then $Z_r(s, \chi, (\mathcal{O}_K^{\times})^2)=Z_{r_1}(s, \chi, (\mathcal{O}_K^{\times})^2).$
\end{lem}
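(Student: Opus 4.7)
The plan is to deduce Lemma \ref{lem 2.11} directly from Lemma \ref{lem 2.10} by taking $f = r_1$, $g = r_0$ (viewed as a two-variable polynomial constant in $y$), and $\beta = 1$, which makes $r = r_1 + \pi r_0$ precisely of the form $F$ appearing in Lemma \ref{lem 2.10}. All the work then goes into verifying that the hypotheses of Lemma \ref{lem 2.10} are met in our situation.

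First I would observe that the assumption $L(r_1, P) = 0$ for every $P \in (\mathcal{O}_K^{\times})^2$ forces $C(r_1, (\mathcal{O}_K^{\times})^2) = 0$: since $L(r_1, P) \in \mathbb{N}$ is bounded above by the integer $0$, the minimal admissible constant in the definition is $0$. Thus the condition $\beta \ge C(r_1, (\mathcal{O}_K^{\times})^2) + 1$ reduces to $1 \ge 1$, which is trivially true.

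Next I would verify the two emptiness conditions for singular loci. The hypothesis $L(r_1, P) = 0$ means at least one of $r_1(P)$, $\partial r_1/\partial x(P)$, $\partial r_1/\partial y(P)$ is a unit of $\mathcal{O}_K$, which in particular forces $P \notin {\rm Sing}_{r_1}(\mathcal{O}_K)$; hence ${\rm Sing}_{r_1}(\mathcal{O}_K) \cap (\mathcal{O}_K^{\times})^2 = \emptyset$. For $r = r_1 + \pi r_0(x)$ at the same $P = (x_0, y_0)$, one computes
\[
r(P) \equiv r_1(P),\quad \tfrac{\partial r}{\partial x}(P) \equiv \tfrac{\partial r_1}{\partial x}(P),\quad \tfrac{\partial r}{\partial y}(P) = \tfrac{\partial r_1}{\partial y}(P) \pmod{\pi},
\]
the last equality holding because $r_0$ is independent of $y$. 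Consequently, whichever of the three quantities was a unit for $r_1$ at $P$ remains a unit for $r$ at $P$, so ${\rm Sing}_r(\mathcal{O}_K) \cap (\mathcal{O}_K^{\times})^2 = \emptyset$ as well.

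Having verified both hypotheses, Lemma \ref{lem 2.10} applied with $n = 2$ yields the desired equality $Z_r(s, \chi, (\mathcal{O}_K^{\times})^2) = Z_{r_1}(s, \chi, (\mathcal{O}_K^{\times})^2)$. The only conceptually delicate point is the bookkeeping for $C$: one has to recognize that the pointwise bound $L(r_1, P) = 0$ translates into the global constant $C(r_1, (\mathcal{O}_K^{\times})^2)$ being $0$, so that the cutoff $\beta \geq C + 1 = 1$ is exactly achieved by the single factor of $\pi$ in front of $r_0$. Everything else is a routine unpacking of definitions.
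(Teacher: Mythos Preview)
Your proposal is correct and follows essentially the same approach as the paper: both verify $C(r_1,(\mathcal{O}_K^{\times})^2)=0$ from the hypothesis, check that the singular loci of $r_1$ and $r$ miss $(\mathcal{O}_K^{\times})^2$ via the unit-preservation argument modulo $\pi$, and then invoke Lemma~\ref{lem 2.10} with $\beta=1$. Your phrasing in terms of congruences mod $\pi$ is equivalent to the paper's phrasing in terms of ${\rm ord}(\cdot)=0$, and your observation that $\partial r/\partial y=\partial r_1/\partial y$ exactly (since $r_0$ depends only on $x$) is a mild sharpening but not a different idea.
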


\begin{proof}
Since $L(r_1, P)=0$ for all $P\in(\mathcal{O}_K^{\times})^2$, it then follows that
$C(r_1, (\mathcal{O}_K^{\times})^2)=0$. Further, the hypothesis
$L(r_1, P)=0$ tells us that at least one of
${\rm ord}\big(r_1(P)\big), {\rm ord}\big(\frac{\partial r_1}{\partial x}(P)\big)$
and ${\rm ord}\big(\frac{\partial r_1}{\partial y}(P)\big)$ is equal to 0.
Equivalently, at least one of $r_1(P)$, $\frac{\partial r_1}{\partial x}(P)$
and $\frac{\partial r_1}{\partial y}(P)$ belongs $\mathcal{O}_K^{\times}$,
namely, at least one of them is nonzero. This implies that
${\rm Sing}_{r_1}(\mathcal{O}_K)\bigcap(\mathcal{O}_K^{\times})^2=\emptyset$.

If ${\rm ord}(r_1(P))=0$, then one has
$${\rm ord}(r(P))={\rm ord}(r_1(P)+\pi r_0(P))={\rm ord}(r_1(P))=0.$$
Hence $r(P)\in\mathcal{O}_K^{\times}$, i.e. $r(P)\ne 0$.

If ${\rm ord}(\frac{\partial r_1}{\partial x}(P))=0$ or
${\rm ord}(\frac{\partial r_1}{\partial y}(P))=0$,
then the same argument as above yields that $\frac{\partial r}{\partial x}(P)\ne 0$
or $\frac{\partial r}{\partial y}(P)\ne 0$. It follows immediately that
$${\rm Sing}_r(\mathcal{O}_K)\bigcap(\mathcal{O}_K^{\times})^2
={\rm Sing}_{r_1}(\mathcal{O}_K)\bigcap(\mathcal{O}_K^{\times})^2=\emptyset.$$
Then applying Lemma \ref{lem 2.10} gives us that
$$Z_r(s, \chi, (\mathcal{O}_K^{\times})^2)
=Z_{r_1}(s, \chi, (\mathcal{O}_K^{\times})^2)$$
as expected. This ends the proof of Lemma \ref{lem 2.11}.
\end{proof}

\section{\bf Proof of Theorem \ref{thm 1.1}}

In this section, we show Theorem \ref{thm 1.1}.\\

{\it Proof of Theorem \ref{thm 1.1}.}
Since $\mu_1\in\mathcal{O}_K^{\times}$, one has
$\bar{h}(x, y)=\bar{\mu}_1x^d+\bar{\mu}_2y^m$ with $\bar{\mu}_1\ne\bar{0}$.
Moreover, since $p\nmid m$, there exists an integer $m_0$ such that $mm_0\equiv 1\pmod p$,
which implies that $mm_0=1$ in $K$, i.e. $m\in\mathcal{O}_K^{\times}$.
Let $\mu_2=\pi^{e_0}\mu_{2, 1}$ with $\mu_{2, 1}\in\mathcal{O}_K^{\times}$.

We notice that if $P=(x_0, y_0)\in {\rm Sing}_{\bar{h}}(\mathbb{F}_q)$,
then $\frac{\partial \bar{g}}{\partial y}(P)=m\bar{\mu}_2y_0^{m-1}=\bar{0}$
tells us that $\bar{\mu}_2y_0=\bar{0}$, which infers that
$\bar h(P)=\bar \mu _1x_0^d$. So if $d=0$, then $\bar{h}(P)=\bar{\mu}_1\ne\bar{0}$,
and if $d=1$, then $\frac{\partial \bar{h}}{\partial x}(P)=\bar{\mu}_1\ne\bar{0}$.
Thus ${\rm Sing}_{\bar{h}}(\mathbb{F}_q)=\emptyset$ if $d\le 1$. This yields
that $S(h)=\emptyset$. Using Lemma \ref{lem 2.4}, one gets that
$$Z_h(s, \chi)=\dfrac{H_1(q^{-s})}{1-q^{-1-s}},$$
where $H_1(x)\in\mathbb{C}[x]$. Hence Theorem \ref{thm 1.1} is true if $d\le 1$.

In what follows, we let $d\ge 2$. Let $h(x, y)=h_1(x, y)+\pi h_0(x)$ with
$h_1(x, y)=\mu_1x^d+\mu_2y^m$. Evidently, $h_1(x, y)=\mu_1x^d+\mu_2y^m$
is a polynomial globally non-degenerate with respect to its Newton polyhedra
since $p\nmid m$, but $h$ may be degenerate with respect to its Newton polyhedra.
That is, Lemma \ref{lem 2.3} can be applied to $h_1(x, y)$, but cannot
be applied to $h(x, y)$ directly. If we can show that
\begin{align}\label{eqno 3.0}
Z_h(s, \chi)=Z_{h_1}(s, \chi),
\end{align}
then applying Lemma \ref{lem 2.3} to $h_1$ gives us that
$$Z_h\big(s, \chi)=Z_{h_1}\big(s, \chi)=\dfrac{H_2(q^{-s})}
{(1-q^{-1-s})(1-q^{-\tilde d-\tilde m-\tilde d\tilde m\gcd(d, m)s})},$$
where $H_2(x)\in\mathbb{C}[x]$. Thus Theorem \ref{thm 1.1} holds for $d\ge 2$.
It remains to prove that (\ref{eqno 3.0}) is true which will be
done in the following.

First, it is easy to see that $\Gamma(h)=\Gamma(h_1)$ since ${\rm ldeg}(h_0)\ge d+1$.
By (\ref{2.2}), we have
\begin{align}\label{eqno 3.1}
Z_h(s, \chi)=Z_h\big(s, \chi, (\mathcal{O}_K^{\times})^2\big)
+\sum_{i=1}^{5}Z_h\big(s, \chi, E(\Delta_{\gamma_i}\bigcap(\mathbb{N}^2\setminus \{0\}))\big),
\end{align}
where $\gamma_i$ is defined in (\ref{2.3}). Now, we calculate the
six integrals on the right-hand side of (\ref{eqno 3.1}) respectively.

For $Z_h(s, \chi, (\mathcal{O}_K^{\times})^2)$, let $P=(x_0, y_0)\in(\mathcal{O}_K^{\times})^2$.
If $\mu_2\in\mathcal{O}_K^{\times}$, then
$\frac{\partial h_1}{\partial y}(P)=m\mu_2y_0^{m-1}\in\mathcal{O}_K^{\times}$
since $m\in\mathcal{O}_K^{\times}$, which implies that
${\rm ord}(\frac{\partial h_1}{\partial y}(P))=0$.
If $\mu_2\in\pi\mathcal{O}_K$, then
$${\rm ord}(h_1(P))={\rm ord}(\mu_1x_0^d+\mu_2y_0^m)={\rm ord}(\mu_1x_0^d)=0$$
since $x_0, \mu_1\in\mathcal{O}_K^{\times}$.
Thus for any $P\in(\mathcal{O}_K^{\times})^2$, one has
$$L(h_1, P)=\min\Big\{{\rm ord}\big(h_1(P)\big), {\rm ord}\big(\dfrac{\partial h_1}{\partial x}(P)\big),
{\rm ord}\big(\dfrac{\partial h_1}{\partial y}(P)\big)\Big\}=0.$$
By Lemma \ref{lem 2.11}, we have
\begin{equation}\label{eqno 3.2}
Z_h\big(s, \chi, (\mathcal{O}_K^{\times})^2)=Z_{h_1}\big(s, \chi, (\mathcal{O}_K^{\times})^2).
\end{equation}

Let $i$ be an integer with $1\le i\le 3$.
Let $\alpha_i$ be the vector given  in (\ref{2.3'}) and write
$\alpha_i=(\alpha_{i,1}, \alpha_{i, 2})$. By (\ref{eqno 2.4}), we deduce that
\begin{equation*}
E(\Delta_{\gamma_i}\bigcap(\mathbb{N}^2\setminus \{0\}))
=\bigcup_{a=1}^{\infty}(\pi^{a\alpha_{i, 1}}\mathcal{O}_K^{\times}
\times\pi^{a\alpha_{i, 2}}\mathcal{O}_K^{\times})
:=\bigcup_{a=1}^{\infty}D_i(a).
\end{equation*}
Thus
$$Z_h\big(s, \chi, E(\Delta_{\gamma_i}\bigcap(\mathbb{N}^2\setminus \{0\}))\big)
=\sum_{a=1}^{\infty}Z_h(s, \chi, D_i(a)).$$

For $Z_h(s, \chi, D_i(a))$, we make the following change of variables of the form:
$(x, y)\mapsto(\pi^{a\alpha_{i, 1}}x_1, \pi^{a\alpha_{i, 2}}y_1)$.
Then Lemma \ref{lem 2.5} (i) gives us that
\begin{equation}\label{eqno 3.3'}
Z_h\big(s, \chi, E(\Delta_{\gamma_i}\bigcap(\mathbb{N}^2\setminus \{0\}))\big)
=\sum_{a=1}^{\infty}q^{-a(\alpha_{i ,1}+\alpha_{i ,2})}Z_{h_{i, a}}(s, \chi, (\mathcal{O}_K^{\times})^2),
\end{equation}
where $h_{i, a}(x, y)=\mu_1\pi^{ad\alpha_{i, 1}}x^d+\mu_{2, 1}\pi^{e_0+am\alpha_{i, 2}}y^m
+\pi h_0(\pi^{a\alpha_{i, 1}}x)$. Let
$$e_{i, a}:=\min\{ad\alpha_{i, 1},\ e_0+am\alpha_{i, 2}\}.$$
Since ${\rm ldeg}(h_0)\ge d+1$, one can write
$h_0(x)=\sum_{j=d+1}^{\infty}b_jx^j\in\mathcal{O}_K[x]$.
Then $h_0(\pi^{a\alpha_{i, 1}}x)=\sum_{j=d+1}^{\infty}b_j\pi^{a\alpha_{i, 1}j}x^j$
with $b_j\in\mathcal{O}_K$. Thus for any integer $j$ with $j\ge d+1$,
one derives that
\begin{align*}
{\rm ord}(b_j\pi^{a\alpha_{i, 1}j})={\rm ord}(b_j)+a\alpha_{i, 1}j
>ad\alpha_{i, 1}\ge e_{i, a}.
\end{align*}
Hence it follows that $\pi^{-e_{i, a}}h_0(\pi^{a\alpha_{i, 1}}x)\in\mathcal{O}_K[x]$.
Then by Lemma \ref{lem 2.5} (ii) and (\ref{eqno 3.3'}), we derive that
\begin{align}\label{3.3}
Z_h\big(s, \chi, E(\Delta_{\gamma_i}\bigcap(\mathbb{N}^2\setminus \{0\}))\big)
=\sum_{a=1}^{\infty}q^{-a(\alpha_{i ,1}+\alpha_{i ,2})-e_{i, a}s}
Z_{\tilde h_{i, a}}(s, \chi, (\mathcal{O}_K^{\times})^2),
\end{align}
where
$\tilde h_{i, a}(x, y):=\mu_{i, a, 1}x^d+\mu_{i, a, 2}y^m+\pi(\pi^{-e_{i, a}}h_0(\pi^{a\alpha_{i, 1}}x))$
with $\mu_{i, a, 1}=\mu_1\pi^{ad\alpha_{i, 1}-e_{i, a}}$ and
$\mu_{i, a, 2}=\mu_{2, 1}\pi^{e_0+am\alpha_{i, 2}-e_{i, a}}$
satisfies that either $\mu_{i, a, 1}\in\mathcal{O}_K^{\times}$ or $\mu_{i, a, 2}\in\mathcal{O}_K^{\times}$
since $\mu_1, \mu_{2, 1}\in\mathcal{O}_K^{\times}$.

Let $r_{i, a}(x, y)=\mu_{i, a, 1}x^d+\mu_{i, a, 2}y^m$. We claim that
$L(r_{i, a}, P)=0$ for any $P\in(\mathcal{O}_K^{\times})^2$.
In fact, let $P=(x_0, y_0)\in(\mathcal{O}_K^{\times})^2$.
If $\mu_{i, a, 2}\in\mathcal{O}_K^{\times}$, then
$\frac{\partial h_1}{\partial y}(P)=m\mu_{i, a, 2}y_0^{m-1}\in\mathcal{O}_K^{\times}$
since $m\in\mathcal{O}_K^{\times}$, which tells us that
${\rm ord}(\frac{\partial h_1}{\partial y}(P))=0$. This infers that $L(r_{i, a}, P)=0$.
If $\mu_{i, a, 2}\in\pi\mathcal{O}_K$, then $\mu_{i, a, 1}\in\mathcal{O}_K^{\times}$.
Thus the discussion for $h_1$ yields that $L(r_{i, a}, P)=0$.
The claim is proved. Now by the claim and Lemma \ref{lem 2.11}, we arrive at
\begin{equation}\label{eqno 3.4}
Z_{\tilde h_{i, a}}(s, \chi, (\mathcal{O}_K^{\times})^2)=
Z_{r_{i, a}}(s, \chi, (\mathcal{O}_K^{\times})^2).
\end{equation}
Putting (\ref{eqno 3.4}) into (\ref{3.3}) gives us that
$$Z_h\big(s, \chi, E(\Delta_{\gamma_i}\bigcap(\mathbb{N}^2\setminus \{0\}))\big)
=\sum_{a=1}^{\infty}q^{-a(\alpha_{i ,1}+\alpha_{i ,2})-e_{i, a}s}
Z_{r_{i, a}}(s, \chi, (\mathcal{O}_K^{\times})^2).$$

On the other hand, the same argument as above yields that
$$Z_{h_1}\big(s, \chi, E(\Delta_{\gamma_i}\bigcap(\mathbb{N}^2\setminus \{0\}))\big)
=\sum_{a=1}^{\infty}q^{-a(\alpha_{i ,1}+\alpha_{i ,2})-e_{i, a}s}
Z_{r_{i, a}}(s, \chi, (\mathcal{O}_K^{\times})^2).$$
It then follows that for any integer $i$ with $1\le i\le 3$, we have
\begin{equation}\label{eqno 3.5}
Z_h\big(s, \chi, E(\Delta_{\gamma_i}\bigcap(\mathbb{N}^2\setminus \{0\}))\big)
=Z_{h_1}\big(s, \chi, E(\Delta_{\gamma_i}\bigcap(\mathbb{N}^2\setminus \{0\}))\big).
\end{equation}

In what follows, we show that (3.6) still keeps valid if $i=4$ and 5.

Likewise, by (\ref{eqno 2.5}), we can derive that
\begin{equation*}
E(\Delta_{\gamma_4}\bigcap(\mathbb{N}^2\setminus \{0\}))
=\bigcup_{c\in \tilde S_4}\bigcup_{a=1}^{\infty}\bigcup_{b=1}^{\infty}
(\pi^{c_1+a\alpha_{1, 1}+b\alpha_{3, 1}}\mathcal{O}_K^{\times}\times
\pi^{c_2+a\alpha_{1, 2}+b\alpha_{3, 2}}\mathcal{O}_K^{\times}),
\end{equation*}
where $c=(c_1, c_2)$. Then we get that
$$Z_h\big(s, \chi, E(\Delta_{\gamma_4}\bigcap(\mathbb{N}^2\setminus \{0\}))\big)
=\sum_{c\in \tilde S_4}\sum_{a=1}^{\infty}\sum_{b=1}^{\infty}Z_h(s, \chi, D_4(a, b, c)),$$
where
$D_4(a, b, c)=\pi^{c_1+a\alpha_{1, 1}+b\alpha_{3, 1}}\mathcal{O}_K^{\times}\times
\pi^{c_2+a\alpha_{1, 2}+b\alpha_{3, 2}}\mathcal{O}_K^{\times}$.

For $Z_h(s, \chi, D_4(a, b, c))$, let
$e_1(a, b, c)=c_1+a\alpha_{1, 1}+b\alpha_{3, 1}$
and $e_2(a, b, c)=c_2+a\alpha_{1, 2}+b\alpha_{3, 2}$.
By making the change of variables of the form:
$(x, y)\mapsto(\pi^{e_1(a, b, c)}x_1, \pi^{e_2(a, b, c)}y_1)$
and Lemma \ref{lem 2.5} (i), one has
\begin{equation}\label{eqno 3.7'}
Z_h\big(s, \chi, E(\Delta_{\gamma_4}\bigcap(\mathbb{N}^2\setminus \{0\}))\big)
=\sum_{c\in \tilde S_4}\sum_{a=1}^{\infty}\sum_{b=1}^{\infty}
q^{-e_1(a, b, c)-e_2(a, b, c)}Z_{h_{4, a, b, c}}(s, \chi, (\mathcal{O}_K^{\times})^2),
\end{equation}
where
$$h_{4, a, b, c}(x, y)=\mu_1\pi^{de_1(a, b, c)}x^d+\mu_{2, 1}\pi^{e_0+me_2(a, b, c)}y^m
+\pi h_0(\pi^{e_1(a, b, c)}x).$$
Let $e(a, b, c):=\min\{de_1(a, b, c),\ e_0+me_2(a, b, c)\}$.
The same argument gives us that
$\pi^{-e(a, b, c)}h_0(\pi^{e_1(a, b, c)}x)\in\mathcal{O}_K[x]$.
Then Lemma \ref{lem 2.5} (i) and (\ref{eqno 3.7'}) yields that
\begin{align}\label{3.6}
&Z_h\big(s, \chi, E(\Delta_{\gamma_4}\bigcap(\mathbb{N}^2\setminus \{0\}))\big)\nonumber\\
=&\sum_{c\in \tilde S_4}\sum_{a=1}^{\infty}\sum_{b=1}^{\infty}
q^{-e_1(a, b, c)-e_2(a, b, c)-e(a, b, c)s}
Z_{\tilde h_{4, a, b, c}}(s, \chi, (\mathcal{O}_K^{\times})^2),
\end{align}
where
\begin{align*}
\tilde h_{4, a, b, c}(x, y):=&\mu_1\pi^{de_1(a, b, c)-e(a, b, c)}x^d
+\mu_2\pi^{me_2(a, b, c)-e(a, b, c)}y^m+\pi^{1-e(a, b, c)}h_0(\pi^{e_1(a, b, c)}x)\\
:=&\mu_{4, a, b, c}x^d+\tilde\mu_{4, a, b, c}y^m+\pi(\pi^{-e(a, b, c)}h_0(\pi^{e_1(a, b, c)}x)).
\end{align*}
Let $r_{4, a, b, c}(x, y)=\mu_{4, a, b, c}x^d+\tilde\mu_{4, a, b, c}y^m$.
As before, one can deduce that $L(r_{4, a, b, c}, P)=0$
for all $P\in(\mathcal{O}_K^{\times})^2$.
Hence Lemma \ref{lem 2.11} infers that
\begin{equation}\label{eqno 3.7}
Z_{\tilde h_{4, a, b, c}}(s, \chi, (\mathcal{O}_K^{\times})^2)=
Z_{r_{4, a, b, c}}(s, \chi, (\mathcal{O}_K^{\times})^2).
\end{equation}
Therefore, combining (\ref{3.6}) with (\ref{eqno 3.7}) gives us that
\begin{align}\label{3.8}
&Z_h\big(s, \chi, E(\Delta_{\gamma_4}\bigcap(\mathbb{N}^2\setminus \{0\}))\big)\nonumber\\
=&\sum_{c\in \tilde S_4}\sum_{a=1}^{\infty}\sum_{b=1}^{\infty}
q^{-e_1(a, b, c)-e_2(a, b, c)-e(a, b, c)s}
Z_{r_{4, a, b, c}}(s, \chi, (\mathcal{O}_K^{\times})^2)\nonumber\\
=&Z_{h_1}\big(s, \chi, S(\Delta_{\gamma_4}\bigcap(\mathbb{N}^2\setminus \{0\}))\big).
\end{align}

Furthermore, by the similar argument as for the case $i=4$,
we obtain that
\begin{equation}\label{eqno 3.9}
Z_h\big(s, \chi, E(\Delta_{\gamma_5}\bigcap(\mathbb{N}^2\setminus \{0\}))\big)=
Z_{h_1}\big(s, \chi, E(\Delta_{\gamma_5}\bigcap(\mathbb{N}^2\setminus \{0\}))\big).
\end{equation}

Finally, by (\ref{eqno 3.1}), (\ref{eqno 3.2}), (\ref{eqno 3.5}),
(\ref{3.8}) and (\ref{eqno 3.9}), we have
$$Z_h\big(s, \chi)=Z_{h_1}\big(s, \chi)$$
as desired. So (\ref{eqno 3.0}) is proved.
This concludes the proof of Theorem \ref{thm 1.1}.
\hfill$\Box$

\section{\bf Proof of Theorem \ref{thm 1.2}}

In this final section, we supply the proof of Theorem \ref{thm 1.2}.\\

{\it Proof of Theorem \ref{thm 1.2}.}
Let $g(x, y)=y^m-f(x)$ with the factorization (\ref{eqno 1.1}), where $m$
is an integer such that $m\ge 2$ and $p\nmid m$.
For any integer $i$ with $0\le i\le n$, let $\gamma_i=\pi^{e_i}\gamma_{i, 1}$
with $\gamma_{i, 1}\in\mathcal{O}_K^{\times}$. Then $e_i={\rm ord}(\gamma_i)$
for all integers $i$ with $0\le i\le k$.
Now we define the set $T$ of indexes by
$T:=\{1\le i\le k | \gamma_i\in\mathcal{O}_K\}=\{1\le i\le k | e_i\ge 0\}$.
Then by Lemma \ref{lem 2.7}, one has $e_0+\sum_{i\notin T}n_ie_i\ge 0$.
It follows that
\begin{align*}
g(x, y)=&y^m-\gamma_0\Big(\prod_{i\in T}(x-\gamma_i)^{n_i}\Big)\Big(\prod_{i\notin T}(x-\gamma_i)^{n_i}\Big)\\
=&y^m-\pi^{e_0+\sum_{i\notin T}n_ie_i}\gamma_{0, 1}f_1(x)\prod_{i\in T}(x-\gamma_i)^{n_i},
\end{align*}
where
$$f_1(x):=\prod_{i\notin T}(\pi^{-e_i}x-\gamma_{i, 1})^{n_i}\in\mathcal{A}_K$$
since $e_i<0$ and $\gamma_{i, 1}\in\mathcal{O}_K^{\times}$ for any $i\notin T$,
with $\mathcal{A}_K$ being defined in the introduction section.

Using Lemma \ref{lem 2.6}, we have
\begin{equation}\label{eqno 4.1}
Z_g(s, \chi)=\dfrac{F_1(q^{-s})}{1-q^{-1-s}}+F_2(q^{-s})Z_{g_1}(s, \chi),
\end{equation}
where
$$g_1(x, y):=\sigma f_1(x)\prod_{i\in T}(x-\gamma_i)^{n_i}+\delta y^m$$
with $\sigma:=-\gamma_{0, 1}\in\mathcal{O}_K^{\times}$,
$\delta\in\mathcal{O}_K^*$ and $F_1(x), F_2(x)\in\mathbb{C}[x]$.

If $T=\emptyset$, then $\bar g_1(x, y)=\bar\sigma\bar{\lambda}_0+\bar\delta y^m$
with $\lambda_0=f_1(0)$ satisfying that $\bar\sigma\bar{\lambda}_0\ne\bar{0}$
since $f_1(x)\in\mathcal{A}_K$. By Lemma \ref{lem 2.8}, we have
${\rm Sing}_{\bar g_1}(\mathbb{F}_q)=\emptyset$, which implies that
$S(g_1)=\emptyset$. Then Lemma \ref{lem 2.4} tells us that
\begin{equation}\label{eqno 4.2}
Z_{g_1}(s, \chi)=\dfrac{G_{1, 1}(q^{-s})}{1-q^{-1-s}}
\end{equation}
with $G_{1, 1}(x)\in\mathbb{C}[x]$. From (\ref{eqno 4.1}) and (\ref{eqno 4.2}),
we derive that
$$Z_g(s, \chi)=\dfrac{G_1(q^{-s})}{1-q^{-1-s}},$$
where $G_1(x)\in\mathbb{C}[x]$. So Theorem \ref{thm 1.2} is true if $T=\emptyset$.
It remains to treat the case $T\ne\emptyset$.

In what follows, we let $T\ne\emptyset$. Notice that if the following is true:
\begin{equation}\label{eqno 4.3}
Z_{g_1}(s, \chi)=\dfrac{\tilde G_2(q^{-s})}{(1-q^{-1-s})
\prod\limits_{i\in T\atop n_i\ge 2}(1-q^{-\tilde n_i-m_i-\tilde n_im_i\gcd(n_i, m)s})},
\end{equation}
where $\tilde G_2(x)\in\mathbb{C}[x]$, then (\ref{eqno 4.1}) together with
(\ref{eqno 4.3}) will imply the truth of Theorem \ref{thm 1.2}. So we need only
to prove that (\ref{eqno 4.3}) holds that will be done in the following.

Without loss of any generality, we may let $T:=\{1,\cdots, l\}$ with $1\le l\le k$.
Then there exists a positive integer $r$ and a strictly increasing sequence
$\{i_j\}_{j=0}^r$ of nonnegative integers with $i_0=0$ and $i_r=l$ such that
$$T=\bigcup\limits_{j=0}^{r-1}T_j,$$
where for each integer $j$ with $0\le j\le r-1$, we have
$$T_j:=\{i_j+1,\cdots, i_{j+1}\}$$
and $\bar{\gamma}_{j_1}=\bar{\gamma}_{j_2}$ if
$j_1\in T_j$ and $j_2\in T_j$, and $\bar{\gamma}_{j_1}\ne\bar{\gamma}_{j_2}$
if exactly one of $j_1$ and $j_2$ is in the set $T_j$.

Define $R_1:=\{\gamma_{i_1},\cdots, \gamma_{i_r}\}$.
Then we can choose a lifting $R$ of $\mathbb{F}_q$ in $\mathcal{O}_K$
such that $R_1\subseteq R$, and let $R_2=R\setminus R_1$.
Now we prove (\ref{eqno 4.3}) by induction on $l=|T|$.

If $l=1$, then by making the change of variables of the form:
$(x, y)\mapsto(x_1+\gamma_1, y_1)$, one has
$$Z_{g_1}(s, \chi)=Z_{\tilde g_1}(s, \chi),$$
where $\tilde g_1(x, y)=\sigma x^{n_1}f_1(x+\gamma_1)+\delta y^m$.
Since $f_1(x)\in\mathcal{A}_K$, we have $f_1(x+\gamma_1)=\pi\tilde f_1(x)+f_1(\gamma_1)$
with ${\rm ldeg}(\tilde f_1)\ge 1$. But the definition of $f_1(x)$ gives us that
for any $\alpha\in\mathcal{O}_K$, we have
\begin{equation}\label{eqno 4.4'}
f_1(\alpha)=\prod_{i\notin T}(\pi^{-e_i}\alpha-\gamma_{i, 1})^{n_i}
\equiv \prod_{i\notin T}(-\gamma_{i, 1})^{n_i}\not\equiv 0\pmod{\pi},
\end{equation}
i.e., $f_1(\alpha)\in\mathcal{O}_K^{\times}$.
Particularly, $f_1(\gamma_1)\in\mathcal{O}_K^{\times}$. It then follows that
$$\tilde g_1(x, y)=\varepsilon x^{n_1}+\delta y^m+\pi\sigma\tilde f_1(x)x^{n_1}$$
with $\varepsilon=\sigma f_1(\gamma_1)\in\mathcal{O}_K^{\times}$
and ${\rm ldeg}(\tilde f_1(x)x^{n_1})={\rm ldeg}(\tilde f_1)+n_1\ge n_1+1$.
Now with Theorem \ref{thm 1.1} applied to $\tilde g_1$, we obtain that
$$Z_{g_1}\big(s, \chi)=Z_{\tilde g_1}\big(s, \chi)=\dfrac{G_0(q^{-s})}
{(1-q^{-1-s})(1-q^{-\tilde n_1-m_1-\tilde n_1m_1\gcd(n_1, m)s})}$$
where $G_0(x)\in\mathbb{C}[x]$, as desired. So (4.3) is true if $l=1$.

In what follows, we let $t$ be a positive integer with $2\le t\le k$.
We assume that (\ref{eqno 4.3}) is true for any integer $l$ with $1\le l<t$.
Now let $l=t$. Since
$$\mathcal{O}_K=\bigcup_{a\in R}(a+\pi\mathcal{O}_K),$$
we deduce that
\begin{align}\label{4.4}
Z_{g_1}(s, \chi)=&\sum_{a\in R}Z_{g_1}(s, \chi, D_a)\nonumber\\
=&\sum_{a\in R_1}Z_{g_1}(s, \chi, D_a)+\sum_{a\in R_2}Z_{g_1}(s, \chi, D_a)\nonumber\\
=&\sum_{j=0}^{r-1}Z_{g_1}(s, \chi, D_{\gamma_{i_{j+1}}})+\sum_{a\in R_2}Z_{g_1}(s, \chi, D_a),
\end{align}
where $D_a=(a+\pi\mathcal{O}_K)\times\mathcal{O}_K$.

Let $a\in R_2$ (if $R_2$ is nonempty). Then for $Z_{g_1}(s, \chi, D_a)$,
we make the following change of variables of the form: $(x, y)\mapsto(a+\pi x_1, y_1)$.
Then $Z_{g_1}(s, \chi, D_a)=q^{-1}Z_{g_{1, a}}(s, \chi)$, where
$$g_{1, a}(x, y):=\sigma f_1(a+\pi x)\prod_{i=1}^t\big(\pi x-(\gamma_i-a)\big)^{n_i}+\beta y^m.$$
Since $a\in R_2$, one has $\gamma_i-a\in\mathcal{O}_K^{\times}$
for any integer $i$ with $1\le i\le t$, which infers that
$\bar g_{1, a}(x, y)=\bar\lambda_a+\bar\delta y^m$ with
$$\bar\lambda_a=\bar\sigma\prod_{i=1}^t\big(-(\bar\gamma_i
-\bar a)\big)^{n_i}\overline{f_1(a)}\ne\bar 0$$
since $\overline{f_1(a)}\ne\bar 0$ by (\ref{eqno 4.4'}).
Then Lemma \ref{lem 2.8} applied to $\bar g_{1, a}$ yields that
${\rm Sing}_{\bar g_{1, a}}(\mathbb{F}_q)=\emptyset$,
which implies that $S(g_{1, a})=\emptyset$.
Using Lemma \ref{lem 2.4}, we know that if $a\in R_2$, then
\begin{equation}\label{eqno 4.5}
Z_{g_1}(s, \chi, D_a)=q^{-1}Z_{g_{1, a}}(s, \chi)=\dfrac{G_{2, a}(q^{-s})}{1-q^{-1-s}},
\end{equation}
where $G_{2, a}(x)\in\mathbb{C}[x]$.

For any integer $j$ with $0\le j\le r-1$, we make the change of variables
of the form: $(x, y)\mapsto(\gamma_{i_{j+1}}+\pi x_1, y_1)$. Then
$Z_{g_1}(s, \chi, D_{\gamma_{i_{j+1}}})=q^{-1}Z_{g_{1, j}}(s, \chi),$
where
\begin{align*}
g_{1, j}(x, y)=&\sigma f_1(\gamma_{i_{j+1}}+\pi x)
\prod_{i=1}^t\big(\pi x-(\gamma_i-\gamma_{i_{j+1}})\big)^{n_i}+\delta y^m\\
=&\sigma f_1(\gamma_{i_{j+1}}+\pi x)\prod_{i\in T_j}\big(\pi x-(\gamma_i-\gamma_{i_{j+1}})\big)^{n_i}
\cdot \prod_{i\notin T_j}\big(\pi x-(\gamma_i-\gamma_{i_{j+1}})\big)^{n_i}+\delta y^m\\
:=&\sigma\pi^{\sum_{i\in T_j}n_i}f_{1, j}(x)\prod_{i\in T_j}
\big(x-\pi^{-1}(\gamma_i-\gamma_{i_{j+1}})\big)^{n_i}+\delta y^m
\end{align*}
with
$$f_{1, j}(x)=f_1(\gamma_{i_{j+1}}+\pi x)
\prod_{i\notin T_j}\big(\pi x-(\gamma_i-\gamma_{i_{j+1}})\big)^{n_i}.$$
Since $\gamma_i\not \equiv \gamma_{i_{j+1}}\pmod{\pi}$ for any
integer $i$ with $1\le i\le t$ and $i\notin T_j$, one deduces that
$$f_{1, j}(0)=f_1(\gamma_{i_{j+1}})\prod_{i\notin T_j}\big(-(\gamma_i-\gamma_{i_{j+1}})\big)^{n_i}
\in\mathcal{O}_K^{\times}$$
since $f_1(\gamma_{i_{j+1}})\in \mathcal{O}_K^{\times}$ by (\ref{eqno 4.4'}).
This yields that $f_{1, j}(x)\in\mathcal{A}_K$.
Using Lemma \ref{lem 2.6}, we arrive at
\begin{equation}\label{eqno 4.6}
Z_{g_1}(s, \chi, D_{\gamma_{i_{j+1}}})=q^{-1}Z_{g_{1, j}}(s, \chi)
=\dfrac{M_{1, j}(q^{-s})}{1-q^{-1-s}}+M_{2, j}(q^{-s})Z_{g_{2, j}}(s, \chi),
\end{equation}
where
$$g_{2, j}(x, y):=\sigma f_{1, j}(x)\prod_{i\in T_j}
\big(x-\pi^{-1}(\gamma_i-\gamma_{i_{j+1}})\big)^{n_i}+\delta_1 y^m$$
with $\delta_1\in\mathcal{O}_K^*$ and $M_{1, j}(x), M_{2, j}\in\mathbb{C}[x]$.
Consider the following two cases.

{\sc Case 1.} $r\ge 2$. Then $|T_j|<|T|=t$ for any integer $j$ with
$0\le j\le r-1$. It follows that there exists two integers $i_1$ and $i_2$
with $1\le i_1\ne i_2\le t$ such that
$\bar \gamma_{i_1}\ne \bar\gamma_{i_2}$, and so
$\gamma_{i_1}-\gamma_{i_2}\notin\pi\mathcal{O}_K$.
Using the induction assumption, we have for all integers
$j$ with $0\le j\le r-1$ that
\begin{equation}\label{eqno 4.7}
Z_{g_{2, j}}(s, \chi)=\dfrac{G_{2, j}(q^{-s})}{(1-q^{-1-s})
\prod\limits_{i\in T_j \atop n_i\ge 2}(1-q^{-\tilde n_i-m_i-\tilde n_im_i\gcd(n_i, m)s})},
\end{equation}
where $G_{2, j}(x)\in\mathbb{C}[x]$. From (\ref{4.4}) to (\ref{eqno 4.7}),
it follows that
$$Z_{g_1}(s, \chi)=\dfrac{G_2(q^{-s})}{(1-q^{-1-s})
\prod\limits_{i\in T \atop n_i\ge 2}(1-q^{-\tilde n_i-m_i-\tilde n_im_i\gcd(n_i, m)s})},$$
where $G_2(x)\in\mathbb{C}[x]$, as (\ref{eqno 4.3}) expected.
Thus (\ref{eqno 4.3}) is proved when $r\ge 2$.

{\sc Case 2.} $r=1$. Then $T=T_0$. So for any integer $i$
with $1\le i\le t-1$, we have $\gamma_i-\gamma_t\in\pi\mathcal{O}_K^*$.
Setting $j=0$ in (\ref{eqno 4.6}), we get that
\begin{align*}
g_{2, 0}(x, y)=\sigma x^{n_t}f_{1, 0}(x)\prod_{i=1}^{t-1}(x-\gamma_{i, 1})^{n_i}+\delta_1y^m
\end{align*}
with $\gamma_{i, 1}:=\pi^{-1}(\gamma_i-\gamma_t)\in\mathcal{O}_K^*$.

{\sc Case 2.1.} $\gamma_{i, 1}\in\pi\mathcal{O}_K$ for all integers
$i$ with $1\le i\le t-1$. Since $\sigma\in\mathcal{O}_K^{\times}$,
Lemma \ref{lem 2.6} gives us that
\begin{equation}\label{eqno 4.9}
Z_{g_{2, 0}}(s, \chi)=\dfrac{U_1(q^{-s})}
{1-q^{-1-s}}+U_2(q^{-s})Z_{g_{3, 0}}(s, \chi),
\end{equation}
where
$$g_{3, 0}(x, y):=\sigma x^{n_t}f_{1, 0}(x)
\prod_{i=1}^{t-1}(x-\gamma_{i, 2})^{n_i}+\delta_2 y^m$$
and $U_1(x), U_2(x)\in\mathbb{C}[x]$, and
$\gamma_{i_0, 2}\in\mathcal{O}_K^{\times}$,
i.e., $\gamma_{i_0, 2}\notin\pi\mathcal{O}_K$
for at least one integer $i_0$ with $1\le i_0\le t-1$.
Then $g_{3, 0}$ satisfies the assumption of Case 1.
So applying Case 1 to the polynomial $g_{3, 0}$ gives us that
\begin{equation}\label{eqno 4.10}
Z_{g_{3, 0}}(s, \chi)=\dfrac{G_{3, 0}(q^{-s})}{(1-q^{-1-s})
\prod\limits_{i=1 \atop n_i\ge 2}^t(1-q^{-\tilde n_i-m_i-\tilde n_im_i\gcd(n_i, m)s})}
\end{equation}
with $G_{3, 0}[x]\in\mathbb{C}[x]$.
Thus from (\ref{4.4}) to (\ref{eqno 4.6}) and (\ref{eqno 4.9}) to (\ref{eqno 4.10}),
we obtain that
$$Z_{g_1}(s, \chi)=\dfrac{G_3(q^{-s})}{(1-q^{-1-s})
\prod\limits_{i\in T \atop n_i\ge 2}(1-q^{-\tilde n_i-m_i-\tilde n_im_i\gcd(n_i, m)s})},$$
where $G_3(x)\in\mathbb{C}[x]$. So (\ref{eqno 4.3}) is true in this case.

{\sc Case 2.2.} $\gamma_{i_0, 1}\in\mathcal{O}_K^{\times}$
for an integer $i_0$ with $1\le i_0\le t-1$. Then $g_{2, 0}$
satisfies the assumption of Case 1. Hence
\begin{equation}\label{eqno 4.11}
Z_{g_{2, 0}}(s, \chi)=\dfrac{G_{2, 0}(q^{-s})}{(1-q^{-1-s})
\prod\limits_{i\in T_0\atop n_i\ge 2}(1-q^{-\tilde n_i-m_i-\tilde n_im_i\gcd(n_i, m)s})},
\end{equation}
It follows immediately from (\ref{4.4}) to (\ref{eqno 4.6}) and
(\ref{eqno 4.11}) that (\ref{eqno 4.3}) holds in this case.

This finishes the proof of (\ref{eqno 4.3}). So Theorem \ref{thm 1.2} is proved. \hfill$\Box$

\end{document}